\renewcommand{\subsection}{\subsubsection}
\newtheorem{theorem}{Theorem}[section]
\newtheorem{proposition}{Proposition}[section]
\newenvironment{proof}{\noindent{\bf Proof}.}{\hfill $\Box$ \vspace*{4mm}}
\newtheorem{remark}{Remark}[section]
\def\div{{\rm div}\,}
\def\diag{{\rm diag}\,}
\def\a{\alpha}
\def\nt{|\hspace{-0.7pt}|\hspace{-0.7pt}|}
\begin{document}

\begin{center}
{\LARGE \bf Stability of relativistic plasma-vacuum interfaces}
\end{center}

\vspace*{7mm}

\centerline{\large Yuri Trakhinin}

\begin{center}
Sobolev Institute of Mathematics, Koptyug av. 4, 630090 Novosibirsk, Russia\\
and\\
Novosibirsk State University, Pirogova str. 2, 630090 Novosibirsk, Russia\\
e-mail: trakhin@math.nsc.ru
\end{center}

\vspace*{7mm}
\centerline{\bf \large Abstract}
\noindent We study the plasma-vacuum interface problem in relativistic magnetohydrodynamics for the case when the plasma density does not go to zero continuously, but jumps. In the vacuum region we consider the Maxwell equations for electric and magnetic fields. We show that a sufficiently large vacuum electric field can make the planar interface violently unstable. By using a suitable secondary symmetrization of the vacuum Maxwell equations, we find a sufficient condition that precludes violent instabilities. Under this condition we derive an energy a priori estimate in the anisotropic weighted Sobolev space $H^1_*$ for the variable coefficients linearized problem for nonplanar plasma-vacuum interfaces.

\section{Introduction}
\label{sec:1}

Consider the equations of relativistic magnetohydrodynamics (RMHD) for an ideal fluid (see, e.g., \cite{An,Lich}):
\begin{equation}
\nabla_{\a}(\rho u^{\a})=0, \qquad
\nabla_{\a}T^{\a\beta}=0,
\qquad
\nabla_{\a}(u^{\a}b^{\beta} -u^{\beta}b^{\a})=0,
\label{1}
\end{equation}
where $\nabla_{\a}$ is the covariant derivative with respect to
the Lorentzian metric $g=\diag (-1,1,1,1)$ of the space-time with the
components $g_{\a\beta}$,\footnote{We restrict ourselves to the case of {\it special relativity}, but in the future we hope to extend our results to general relativity (see discussion in Section \ref{sec:9}).} $\rho$ is the proper rest-mass density of the plasma, $u^{\a}$ are components of the 4-velocity,
\[
T^{\a\beta}=(\rho h +B^2)u^{\a}u^{\beta} +qg^{\a\beta} -b^{\a}b^{\beta},
\]
$h= 1+e +(p/\rho )$ is the relativistic specific enthalpy, $p$ is the pressure, $e=e(\rho ,S)$ is the specific internal energy, $S$ is the specific entropy, $B^2=b^{\a}b_{\a}$, $b^{\a}$ are components of the magnetic field 4-vector with respect to the plasma velocity, and $q= p+\frac{1}{2}B^2$ is the total pressure. The 4-vectors satisfy the conditions
$u^{\a}u_{\a} =-1$ and $u^{\a}b_{\a} =0$, and the speed of the light is equal to unity.

Let $(x^0,x)$ be inertial coordinates, with $t=x^0$ a time coordinate and $x=(x^1,x^2,x^3)$ space coordinates. Then
\[
u^0=-u_0=\Gamma ,\quad u^i=u_i =\Gamma v_i,\quad u:=(u_1,u_2,u_3)=\Gamma v,
\]
\[
\Gamma^2=1+|u|^2,\quad
b^0=-b_0=(u,H),\quad
b^i=b_i=\frac{H_i}{\Gamma}+(u,H)v_i,
\]
\[
b:=(b_1,b_2,b_3),\quad
B^2=|b|^2-b_0^2=\frac{|H|^2}{\Gamma^2} +(v,H)^2>0,
\]
where $\Gamma=(1-|v|^2)^{-1/2}$ is the Lorentz factor, $v=(v_1,v_2,v_3)$ is the plasma 3-velocity, $H=(H_1,H_2,H_3)$ is the magnetic field 3-vector, the Latin indices run from 1 to 3, and by $(\ ,\ )$ we denote the scalar product.

The RMHD equations \eqref{1} can be now rewritten in the form of the following system of conservation laws:
\begin{align}
& \partial_t(\rho\Gamma ) +\div (\rho u )=0,\label{2}\\
& \partial_t\left(\rho h \Gamma u +|H|^2v-(v,H)H\right) \nonumber \\
&  + {\rm div} \left((\rho h +B^2) u\otimes u  -b\otimes b\right) + {\nabla}q=0,\label{3}\\
& \partial_t(\rho h\Gamma^2 +|H|^2-q ) +{\rm div} \left(\rho h\Gamma u +|H|^2v-(v,H)H \right)=0,\label{4}\\
& \partial_tH -{\nabla}\times (v {\times} H)=0,\label{5}
\end{align}
where $\partial_t=\partial /\partial t$, $\nabla =(\partial_1,\partial_2,\partial_3)$, $\partial_i=\partial /\partial x^i$, etc.
Moreover, the first equation in the Maxwell equations containing in \eqref{1},
\begin{equation}
\div H=0,\label{6}
\end{equation}
should be now considered as the divergence constraint on the initial data $U (0,x)=U_0(x)$ for the unknown $U=(p,u,H,S)$.

Using \eqref{6} and the additional conservation law (entropy conservation)
\begin{equation}
\partial_t(\rho\Gamma S ) +\div (\rho S u )=0\label{7}
\end{equation}
which holds on smooth solutions of system \eqref{2}--\eqref{5}, and following {\sc Godunov}'s symmetrization procedure \cite{Go}, we can symmetrize the conservation laws \eqref{2}--\eqref{5} in terms of a vector of canonical variables $Q=Q(U)$. This was done by {\sc Ruggeri} \& {\sc Strumia} \cite{RuSt} and also by {\sc Anile} \& {\sc Pennisi} \cite{AP,An}. However, a concrete form of symmetric matrices was not found in \cite{RuSt,AP}. Moreover, if we deal with an initial-boundary value problem it is very inconvenient to work in terms of the vector $Q$.

In principle, from  $Q$ we can return to the vector of {\it primitive variables} $U$ keeping the symmetry property (see \cite{BT}). But, for RMHD finding a concrete form of symmetric matrices associated to the vector $Q$ and returning from $Q$ to $U$ is connected with unimaginable (or even almost unrealizable in practice) calculations. To overcome this serious technical difficulty {\sc Freist\"uhler} \& {\sc Trakhinin} \cite{FT} symmetrized recently the RMHD system by using the Lorentz transform. Namely, taking into account \eqref{6}, for the fluid rest frame ($v=0$) we can rewrite \eqref{2}, \eqref{3}, \eqref{5}, \eqref{7} in a nonconservative form which will be already a symmetric system. After that we should properly apply the Lorentz transform to this system to get a corresponding symmetric system in the LAB-frame. Referring to \cite{FT} for details, here we just write down the resulting symmetrization of RMHD:
\begin{equation}
\label{8}
A_0(U )\partial_tU+\sum_{j=1}^3A_j(U )\partial_jU=0,
\end{equation}
where
\[
A_0=\left(
\begin{array}{cccc}
{\displaystyle\frac{\Gamma}{\rho a^2}} & v^{\sf T} &0 &0 \\[3pt]
v & \mathcal{A} &0 &0 \\
0& 0 & \mathcal{M} &0 \\
0 &0&0 & 1
\end{array}\right),\qquad A_j=\left(
\begin{array}{cccc}
{\displaystyle\frac{u_j}{\rho a^2}} & e_j^{\sf T} &0 &0\\[3pt]
e_j & \mathcal{A}_j &{\mathcal{N}_j}^{\sf T} &0\\
0& \mathcal{N}_j & v_j\mathcal{M} &0 \\
0&0&0&v_j
\end{array}\right),
\]
$a^2=p_{\rho}(\rho ,S)$, the unit column vectors $e_j= (\delta_{1j},\delta_{2j},\delta_{3j})$,
\[
\mathcal{A}=\left( \rho h \Gamma +\frac{|H|^2}{\Gamma} \right) I - \left( \rho h \Gamma +\frac{|H|^2+B^2}{\Gamma}\right) v\otimes v   -\frac{1}{\Gamma}\, H\otimes H +
\frac{(v,H)}{\Gamma}
\bigl( v\otimes H + H\otimes v\bigr) ,
\]
\[
\mathcal{M} =\frac{1}{\Gamma}\, (I +u\otimes u),\qquad \mathcal{N}_j= \frac{1}{\Gamma}\,b\otimes e_j -\frac{v_j}{\Gamma}\, b\otimes v - \frac{H_j}{\Gamma^2}\,I,
\]
\begin{multline*}
\mathcal{A}_j=
v_j\left\{ \left( \rho h \Gamma +\frac{|H|^2}{\Gamma} \right) I - \left( \rho h \Gamma +\frac{|H|^2-B^2}{\Gamma}\right) v\otimes v  -\frac{1}{\Gamma}\, H\otimes H\right\}\\
+\frac{H_j}{\Gamma}\left\{ \frac{1}{\Gamma^2}
\bigl( v\otimes H + H\otimes v\bigr) -2(v,H)(I-v\otimes v) \right\}
\\
+\frac{(v,H)}{\Gamma}\left( H\otimes e_j + e_j \otimes H\right) -\frac{B^2}{\Gamma}
\left( v\otimes e_j + e_j \otimes v\right),
\end{multline*}
and $I$ is the unit matrix. Clearly, $A_{\alpha}$ are symmetric matrices. Note also that the last equation in \eqref{8} is just the nonconservative form ${\rm d}S/{\rm d}t=0$ of \eqref{7}, with ${\rm d}/{\rm d}t=\partial_t +(v,\nabla )$.

As is known \cite{An}, natural physical restrictions guaranteeing the hyperbo\-licity of the RMHD system do not depend on the magnetic field and coincide with corresponding ones in relativistic gas dynamics. In our case, by direct calculations one can show that the hyperbolicity condition $A_0>0$ holds provided that
\begin{equation}
\rho >0,\quad p_{\rho} >0,\quad 0<c_s^2<1\label{9}
\end{equation}
(of course, by default we also assume that $|v|<1$), where $c_s$ is the relativistic speed of sound, $c_s^2= a^2/h=p_{\rho}/h$. The last inequality in \eqref{9} is the {\it relativistic causality} condition.

Plasma-vacuum interface problems for the {\it nonrelativistic} MHD system usually appear in the mathematical modeling of plasma confinement by magnetic fields. Most of theoretical studies were devoted to finding stability criteria of equilibrium states, and the typical work in this direction is the classical paper \cite{BFKK}. According to our knowledge the first mathematical study of the well-posedness of the full (non-stationary) plasma-vacuum model is the recent work \cite{T10}. This model consists in the MHD system in the plasma domain, the elliptic system $\nabla \times \mathcal{H}=0$, $\div \mathcal{H}=0$ (so-called {\it pre-Maxwell dynamics}) in the vacuum region, and corresponding interface boundary conditions \cite{BFKK,T10}, where by $\mathcal{H}$ we denote the vacuum magnetic field.

The relativistic version of the classical plasma-vacuum interface problem \cite{BFKK,T10} is of great interest in connection with various astrophysical applications such as, for example, {\it neutron stars}.
Unlike the nonrelativistic case, we cannot now neglect the displacement current $\partial_tE$ and must consider the full Maxwell equations in vacuum:
\begin{equation}
\partial_t\mathcal{H}+\nabla\times E =0,\qquad \partial_tE-\nabla\times \mathcal{H} =0,\label{10}
\end{equation}
where $E$ is the electric field in the vacuum and we recall that the speed of the light is equal to unity. We do not include the equations
\begin{equation}
\div\mathcal{H}=0,\qquad \div E=0\label{11}
\end{equation}
into the main system \eqref{10} because they are just divergence constraints on the initial data $V(0,x)=V_0(x)$ for the ``vacuum'' unknown $V=(\mathcal{H},E)$. This is true for the Cauchy problem, but below we will also prove this for our plasma-vacuum interface problem.

Let us now discuss boundary conditions which we should pose on the interface (free boundary)  between plasma and vacuum. Assume that this interface is a hypersurface
\begin{equation}
\Sigma (t)=\{ x^1=\varphi (t,x')\},\quad\mbox{with}\quad x'=(x^2,x^3).\label{12}
\end{equation}
Our assumption that interface \eqref{12} has the form of a graph is not a strong restriction for special relativity. Regarding the case of general relativity which we plan to consider in the future, the interface should be a compact  codimension-1 surface. However, as for shock waves we can follow {\sc Majda}'s arguments \cite{Maj83b} for extending the results to a compact plasma-vacuum interface (see also a related discussion in \cite{T09.cpam} for gas dynamics). On the other hand, even for general relativity, the above assumption is not still a strong restriction because, as in \cite{FR}, we can consider the interface not only locally in time (in the sense of a future local-in-time existence theorem), but also locally in space (for compactly supported initial data).\footnote{Of course, locally we may suppose that the interface has the form of a graph.}

Let $\Omega^{\pm} (t)=\left\{x^1\gtrless \varphi (t,x')\right\}$ are space-time domains occupied by the plasma and the vacuum respectively. That is, in the domain
$\Omega^+(t)$ we consider system \eqref{2}--\eqref{5} (or \eqref{8}) governing the motion of an ideal relativistic plasma and in the domain $\Omega^-(t)$ we have the Maxwell equations \eqref{10} in vacuum. The interface $\Sigma (t)$ is a free boundary and moves with the velocity of plasma particles at the boundary:
\begin{equation}
\partial_t\varphi =v_N \quad \mbox{on}\ \Sigma (t),\label{13}
\end{equation}
where $v_N=(v,N)$ and $N=(1,-\partial_2\varphi ,-\partial_3\varphi )$.

The relativistic counterpart of the interface boundary condition $[q]= q|_{\Sigma}-\frac{1}{2}|\mathcal{H}|^2_{|\Sigma}=0$ in \cite{BFKK,T10} is the condition
\begin{equation}
q=\frac{|\mathcal{H}|^2 -|E|^2}{2} \quad \mbox{on}\ \Sigma (t).\label{14}
\end{equation}
Indeed, remembering that the plasma electric field $E^+=-v\times H$, the relativistic total pressure can be rewritten in the form
\[
q=p+\frac{|H|^2}{2\Gamma^2} +\frac{(v,H)^2}{2}= p+\frac{|{H}|^2 -|E^+|^2}{2}.
\]
That is, the value $(|{H}|^2 -|E^+|^2)/2$ can be considered as the electromagnetic contribution
(electromagnetic pressure) to the relativistic total pressure in plasma, and in the right-hand side of equation \eqref{14} we have the corresponding value in vacuum. Note also that the values $|{H}|^2 -|E^+|^2$  and $|\mathcal{H}|^2 -|E|^2$ are Lorentz invariants, in particular,
$q=p+\frac{1}{2}|H'|^2$, where $H'=H|_{v=0}$ is the magnetic field in the fluid rest frame (and $E^+_{|v=0}=0$).

As in the nonrelativistic case \cite{BFKK,T10}, we have also the conditions
\begin{equation}
H_N=0\quad \mbox{on}\ \Sigma (t),
\label{15.1}
\end{equation}
\begin{equation}
\mathcal{H}_N=0\quad \mbox{on}\ \Sigma (t),
\label{15.2}
\end{equation}
where $\mathcal{H}_N=(\mathcal{H},N)$ and ${H}_N=({H},N)$. These conditions express the fact that the interface is a perfectly conducting free boundary. At the same time, as for current-vortex sheets \cite{T09} they are not real boundary conditions and should be considered as restrictions on the initial data (see Section \ref{sec:3} for the proof).

At last, again as in the nonrelativistic case \cite{BFKK}, we have the boundary conditions
\[
\mathcal{H}\partial_t\varphi = N\times E
\]
on $\Sigma (t)$. However, the first of these boundary conditions is reduced to \eqref{15.2}. That is, we have finally the following two boundary conditions
\begin{equation}
E_2=\mathcal{H}_3\partial_t\varphi -E_1\partial_2\varphi,\quad
E_3=-\mathcal{H}_2\partial_t\varphi -E_1\partial_3\varphi\quad \mbox{on}\ \Sigma (t)
\label{16}
\end{equation}
for the vacuum electric and magnetic fields. It is worth noting that these boundary conditions are just jump conditions which can be deduced for the system $\partial_tH^{\pm} +\nabla \times E^{\pm} =0$ in $\Omega^{\pm}(t)$, with $E^+=-v\times H$, $E^-=E$, $H^+=H$, and $H^-=\mathcal{H}$, i.e., for system \eqref{5} and the first system in \eqref{10}. To get the final form \eqref{16} we should take into account \eqref{13} and \eqref{15.1}.

Thus, we obtain the free boundary vale problem for system \eqref{8} in $\Omega^+(t)$ and system \eqref{10} in $\Omega^-(t)$ with the boundary conditions \eqref{13}, \eqref{14}, \eqref{16} on $\Sigma (t)$ and the initial  data
\begin{equation}
\begin{array}{c}
{U} (0,{x})={U}_0({x}),\quad {x}\in \Omega^{+} (0),\quad
{V} (0,{x})={V}_0({x}),\quad {x}\in \Omega^{-} (0),\\[3pt]
\varphi (0,{x}')=\varphi_0({x}'),\quad {x}\in\mathbb{R}^2.
\end{array}
\label{17}
\end{equation}
Moreover, we will prove that \eqref{6}, \eqref{11}, \eqref{15.1} and \eqref{15.2} are restrictions on the initial data \eqref{17}, i.e., if they are satisfied at the first moment $t=0$, then they hold for all $t>0$. System \eqref{10} is always hyperbolic and we assume that the hyperbolicity condition \eqref{9} is satisfied up to the boundary of the domain $\Omega^+(t)$. That is, as in
the fluid-vacuum problem in \cite{Lind,T09.cpam}, we consider the case when the density does not go to zero continuously, but jumps. However, unlike the problem in \cite{Lind,T09.cpam}, we do not have the boundary condition $p|_{\Sigma}=0$ and, therefore, the condition $\rho |_{\Sigma}>0$ does not contradict, for example, the equation of state of a polytropic gas. We may suppose that the jump $[\rho ]=\rho |_{\Sigma}$ is very small that corresponds to a small but non-zero pressure on the interface.

Our final goal is to find (at least sufficient) conditions on the initial data \eqref{17} providing the local-in-time existence and uniqueness of a solution $(U,V,\varphi )$ of problem
\eqref{8}, \eqref{10}, \eqref{13}, \eqref{14}, \eqref{16}, \eqref{17} in Sobolev spaces. The first step towards the proof of such an existence theorem is a careful study of a corresponding linearized problem. The main result of the present paper is finding a {\it sufficient stability condition} which gives a (basic) energy a priori estimate. Roughly speaking, we show that if the unperturbed vacuum electric field is small enough and the unperturbed plasma and vacuum magnetic fields are nowhere parallel to each other on the interface, then the interface is stable. It is noteworthy that this condition could be {\it essentially specified} by a numerical analysis of the positive definiteness of some matrix of order 42 (see \eqref{122} in Section \ref{sec:6}).  The crucial role in deriving a basic a priori estimate for the linearized problem is played by a secondary symmetrization of the Maxwell equations \eqref{10} (see Section \ref{sec:2}). Using the terminology of \cite{T06}, we construct a {\it dissipative 1-symmetrizer} for the linearized problem.  This allows us to find the mentioned stability condition. Moreover, we show that the corresponding constant coefficients linearized problem for a planar interface can be violently unstable for a sufficiently large vacuum electric field.

The main difficulty in the study of the relativistic plasma-vacuum problem is the same as for compressible current-vortex sheets \cite{T05,T09} for the MHD system, i.e., we cannot test the Kreiss-Lopatinski condition \cite{Kreiss,Maj83b} analytically. On the other hand, since, as for current-vortex sheets, the number of dimensionless parameters for the constant coefficients linearized problem is big, a complete numerical test of the Kreiss-Lopatinski condition seems unrealizable in practice. We overcome this principal difficulty by using the energy method and constructing the mentioned dissipative 1-symmetrizer.

Another principal difficulties are again the same as for current-vortex sheets. That is,
the Kreiss-Lopatinski condition can be satisfied only in a weak sense and the plasma-vacuum interface is a characteristic free boundary. Therefore, we have a loss of derivatives in our a priori estimates and a natural loss of control on derivatives in the normal direction that cannot be compensated in RMHD (unlike the situation in gas dynamics \cite{CS,T09.cpam}). Therefore, in our nonlinear analysis (this work is now in progress) we have to use a suitable Nash-Moser-type iteration scheme (as, for example, in \cite{CS,T09,T09.cpam}) and the natural functional setting is provided by the anisotropic weighted Sobolev spaces $H^m_*$ (see \cite{YM,Sec} and Section \ref{sec:4} for their definition). In this paper, we prove our basic a priori estimate in the space $H^1_*$ for the ``plasma'' unknown $U$, and for the ``vacuum'' unknown $V$ we have a full regularity in the normal direction and can work in the usual Sobolev space $H^1$.

The rest of the paper is organized as follows. In Section \ref{sec:2} we write down the mentioned secondary symmetrization of the vacuum Maxwell equations. In Section \ref{sec:3} we reduce our free boundary problem to an initial-boundary value problem in a fixed domain and discuss properties of the reduced problem. In Section \ref{sec:4}, we obtain the linearized problem and formulate the main result for it (Theorem \ref{t1}). In Section \ref{sec:5} we properly reduce the linearized problem to that with homogeneous boundary conditions and homogeneous Maxwell equations. Section \ref{sec:6} is devoted to the derivation of the basic a priori estimate for the reduced linearized problem in the case of constant coefficients. In Section \ref{sec:7}, by considering particular cases of the unperturbed constant solution, we prove that the constant coefficients linearized problem can be ill-posed. In Section \ref{sec:8}, we extend the result of Section \ref{sec:6} to the case of variable coefficients. At last, Section \ref{sec:9} contains concluding remarks.

\section{Secondary symmetrization of the vacuum Maxwell equations}
\label{sec:2}

The Maxwell equations \eqref{10} form the symmetric hyperbolic system
\begin{equation}
\partial_tV +\sum_{j=1}^3B_j\partial_jV=0,
\label{18}
\end{equation}
with
\[
B_1=\left(\begin{array}{cccccc}
0 & 0 & 0& 0 & 0 & 0 \\
0 & 0 & 0& 0 & 0 & -1 \\
0 & 0 & 0& 0 & 1 & 0 \\
0 & 0 & 0& 0 & 0 & 0 \\
0 & 0 & 1& 0 & 0 & 0 \\
0 & -1 & 0& 0 & 0 & 0
\end{array} \right),\quad
B_2=\left(\begin{array}{cccccc}
0 & 0 & 0& 0 & 0 & 1 \\
0 & 0 & 0& 0 & 0 & 0 \\
0 & 0 & 0& -1 & 0 & 0 \\
0 & 0 & -1& 0 & 0 & 0 \\
0 & 0 & 0& 0 & 0 & 0 \\
1 & 0 & 0& 0 & 0 & 0
\end{array} \right),
\]
\[
B_3=\left(\begin{array}{cccccc}
0 & 0 & 0& 0 & -1 & 0 \\
0 & 0 & 0& 1 & 0 & 0 \\
0 & 0 & 0& 0 & 0 & 0 \\
0 & 1 & 0& 0 & 0 & 0 \\
-1 & 0 & 0& 0 & 0 & 0 \\
0 & 0 & 0& 0 & 0 & 0
\end{array} \right).
\]
At the same time, they imply the wave equation for $\mathcal{H}$ (and $E$) which can be, in turn, symmetrized, i.e., it can be rewritten as a symmetric hyperbolic system for derivatives of $\mathcal{H}$ (and $E$). There are different ways to symmetrize the wave equation, and symmetrizations may depend on arbitrary constants or functions (see, e.g., \cite{BT,T06}). However, for our present goals it is better to use a counterpart of one of these symmetrizations for the original system \eqref{18}. Since the original system was already symmetric, it is natural to call its new symmetric form {\it secondary symmetrization}.

Consider \eqref{18} in the whole space $\mathbb{R}^3$. Clearly,
\[
\frac{\rm d}{{\rm d}t}\int_{\mathbb{R}^3}|V|^2\,{\rm d}x=0,
\]
but, using constraints \eqref{11}, by simple manipulations from \eqref{10} we can deduce the following additional conserved integrals:
\[
\frac{\rm d}{{\rm d}t}\int_{\mathbb{R}^3}(\mathcal{H}_2{E}_3-\mathcal{H}_3{E}_2)\,{\rm d}x=0,
\]
\[
\frac{\rm d}{{\rm d}t}\int_{\mathbb{R}^3}(\mathcal{H}_3{E}_1-\mathcal{H}_1{E}_3)\,{\rm d}x=0,\quad
\frac{\rm d}{{\rm d}t}\int_{\mathbb{R}^3}(\mathcal{H}_1{E}_2-\mathcal{H}_2{E}_1)\,{\rm d}x=0.
\]
Hence, we have
\begin{equation}
\frac{\rm d}{{\rm d}t}\int_{\mathbb{R}^3}\bigl\{|V|^2
+\nu_1(\mathcal{H}_2{E}_3-\mathcal{H}_3{E}_2)
+\nu_2(\mathcal{H}_3{E}_1-\mathcal{H}_1{E}_3)
+\nu_3(\mathcal{H}_1{E}_2-\mathcal{H}_2{E}_1)\bigr\}\,{\rm d}x=0,\label{19}
\end{equation}
where $\nu_i$ are yet arbitrary constants, but below they will be arbitrary functions satisfying a certain hyperbolicity condition.

The energy identity \eqref{19} also reads
\[
\frac{\rm d}{{\rm d}t}\int_{\mathbb{R}^3}(\mathcal{B}_0V,V)\,{\rm d}x=0,
\]
where
\[
\mathcal{B}_0=\left(\begin{array}{cccccc}
1 & 0 & 0& 0 & \nu_3 & -\nu_2 \\
0 & 1 & 0& -\nu_3 & 0 & \nu_1 \\
0 & 0 & 1& \nu_2 & -\nu_1 & 0 \\
0 & -\nu_3 & \nu_2& 1 & 0 & 0 \\
\nu_3 & 0 & -\nu_1& 0 & 1 & 0 \\
-\nu_2 & \nu_1 & 0& 0 & 0 & 1
\end{array} \right).
\]
We now assume that $\nu_i$ in $\mathcal{B}_0$ are arbitrary functions $\nu_i(t,x)$. Then, taking into account the divergence constraints \eqref{11}, it follows from \eqref{18} that
\begin{equation}
\mathcal{B}_0\partial_tV +\sum_{j=1}^3\mathcal{B}_0B_j\partial_jV +R_1\div\mathcal{H}
+R_2\div E
=\mathcal{B}_0\partial_tV +\sum_{j=1}^3\mathcal{B}_j\partial_jV=0,
\label{20}
\end{equation}
where
\[
\mathcal{B}_1=
\left(\begin{array}{cccccc}
\nu_1 & \nu_2 & \nu_3& 0 & 0 & 0 \\
\nu_2 & -\nu_1 & 0& 0 & 0 & -1 \\
\nu_3 & 0 & -\nu_1& 0 & 1 & 0 \\
0 & 0 & 0& \nu_1 & \nu_2 & \nu_3 \\
0 & 0 & 1& \nu_2 & -\nu_1 & 0 \\
0 & -1 & 0& \nu_3 & 0 & -\nu_1
\end{array} \right),
\quad
\mathcal{B}_2=
\left(\begin{array}{cccccc}
-\nu_2 & \nu_1 & 0& 0 & 0 & 1 \\
\nu_1 & \nu_2 & \nu_3& 0 & 0 & 0 \\
0 & \nu_3 & -\nu_2& -1 & 0 & 0 \\
0 & 0 & -1& -\nu_2 & \nu_1 & 0 \\
0 & 0 & 0& \nu_1 & \nu_2 & \nu_3 \\
1 & 0 & 0& 0 & \nu_3 & -\nu_2
\end{array} \right),
\]
\[
\mathcal{B}_3=
\left(\begin{array}{cccccc}
-\nu_3 & 0 & \nu_1& 0 & -1 & 0 \\
0 & -\nu_3 & \nu_2& 1 & 0 & 0 \\
\nu_1 & \nu_2 & \nu_3& 0 & 0 & 0 \\
0 & 1 & 0& -\nu_3 & 0 & \nu_1 \\
-1 & 0 & 0& 0 & -\nu_3 & \nu_2 \\
0 & 0 & 0& \nu_1 & \nu_2 & \nu_3
\end{array} \right),
\quad
R_1=\left(\begin{array}{c} \nu_1 \\
\nu_2 \\
\nu_3 \\
0 \\
0 \\
0
\end{array} \right),\quad R_2=\left(\begin{array}{c}
0 \\
0 \\
0 \\
\nu_1 \\
\nu_2 \\
\nu_3
\end{array} \right).
\]
The symmetric system \eqref{20} is hyperbolic if $\mathcal{B}_0>0$, i.e.,
\begin{equation}
|\nu |<1,\label{21}
\end{equation}
with the vector-function $\nu = (\nu_1, \nu_2, \nu_3)$. Following \cite{T09}, we can call the set $\mathbb{S}=(\mathcal{B}_0,R_1,R_2)$ the {\it secondary generalized Friedrichs symmetrizer}.

We will use \eqref{20} for obtaining a dissipative energy integral for the linearized plasma-vacuum problem, and for further arguments we will need the representations
\begin{equation}
\mathcal{B}_j=\mathcal{B}_0B_j +\mathcal{K}_j\label{22}
\end{equation}
following from the fact that
\[
R_1\div \mathcal{H}+R_2\div E=\sum_{j=1}^3\mathcal{K}_j\partial_jV,\qquad  \mathcal{K}_j=I_2\otimes (\nu \otimes e_j),
\]
where $I_2$ is the unit matrix of order 2.

\section{Reduced problem in a fixed domain and its properties}
\label{sec:3}

We now return to our free boundary value problem \eqref{8}, \eqref{13}, \eqref{14}, \eqref{16}--\eqref{18} and reduce it to an initial-boundary value problem in the fixed space
domain $\mathbb{R}^3_+=\{x^1>0,\ x'\in \mathbb{R}^2\}$. In \cite{T09,T09.cpam,T10}, to avoid assumptions about compact support of the initial data in the existence theorem and work glo\-bally in  $\mathbb{R}^3_+$ we used a cut-off function
$\chi (x^1)\in C_0^{\infty}( \mathbb{R})$ in the change of independent variables. Since in the future we plan to extend our results to the {\it general relativistic} version of the plasma-vacuum interface problem, here we prefer to apply a simplest straightening of the interface $\Sigma$ (without using the cut-off $\chi$). That is, in our future local-in-time existence theorem we will consider compactly supported initial data as, for example, in \cite{CS,FR}.

We make the simple change of variables $\tilde{x}^1=x^1-\varphi (t,x')$ for the RMHD system \eqref{8} and $\tilde{x}^1=-x^1+\varphi (t,x')$ for the Maxwell equations \eqref{10}. In other words, the unknowns $U$ and $V$ being smooth in $\Omega^{\pm}(t)$ are replaced by the vector-functions
\[
\widetilde{U}(t,x ):= {U}(t,x^1+\varphi (t,x'),x'),\quad
\widetilde{V}(t,x ):= V(t,-x^1+\varphi (t,x'),x'),
\]
which are smooth in the half-space $\mathbb{R}^3_+$. Dropping for convenience tildes, we reduce \eqref{8}, \eqref{13}, \eqref{14}, \eqref{16}--\eqref{18}  to the initial boundary value problem
\begin{equation}
\mathbb{P}(U,\varphi )=0\quad\mbox{in}\ [0,T]\times \mathbb{R}^3_+,\label{24}
\end{equation}
\begin{equation}
\mathbb{V}(V,\varphi )=0\quad\mbox{in}\ [0,T]\times \mathbb{R}^3_+,\label{25}
\end{equation}
\begin{equation}
\mathbb{B}(U,V,\varphi )=0\quad\mbox{on}\ [0,T]\times\{x^1=0\}\times\mathbb{R}^{2},\label{26}
\end{equation}
\begin{equation}
(U,V)|_{t=0}=(U_0,V_0)\quad\mbox{in}\ \mathbb{R}^3_+,\qquad \varphi|_{t=0}=\varphi_0\quad \mbox{in}\ \mathbb{R}^{2},\label{27}
\end{equation}
where $\mathbb{P}(U,\varphi )=L(U,\varphi )U$, $\mathbb{V}(V,\varphi )= {M}(\varphi )V$,
\[
L(U, \varphi )=A_0(U)\partial_t +\widetilde{A}_1(U,\varphi )\partial_1+A_2(U )\partial_2+A_3(U )\partial_3,
\]
\[
\widetilde{A}_1(U,\varphi )= A_1(U )-A_0(U)\partial_t\varphi -A_2(U)\partial_2\varphi
-A_3(U)\partial_3\varphi ,
\]
\[
{M}(\varphi )=I\,\partial_t -\widetilde{B}_1(\varphi )\partial_1+B_2\partial_2+B_3\partial_3,
\quad
\widetilde{B}_1(\varphi )= B_1-I\,\partial_t\varphi -B_2\partial_2\varphi
-B_3\partial_3\varphi ,
\]
\[\mathbb{B}(U,V,\varphi )=\left(
\begin{array}{c}
v_N -\partial_t\varphi  \\[3pt] q -\frac{1}{2}(|\mathcal{H}|^2-|E|^2) \\[3pt]
E_{\tau_2}-\mathcal{H}_3\partial_t\varphi  \\[3pt]
E_{\tau_3}+\mathcal{H}_2\partial_t\varphi
\end{array}
\right),\quad E_{\tau_i}=E_1\partial_i\varphi+E_i,\quad i=2,3.
\]

Regarding condition \eqref{15.1},
\begin{equation}
H_N|_{x^1=0}=0,\label{15.1'}
\end{equation}
 and the divergence constraint \eqref{6}, which in the straightened variables takes the form
\begin{equation}
\div {\sf h}=0,\label{28}
\end{equation}
with ${\sf h}=(H_N,H_2,H_3)$, referring to \cite{T09} for the proof, we have the following proposition.

\begin{proposition}
Let the initial data \eqref{27} satisfy \eqref{15.1'} and \eqref{28}.
If problem \eqref{24}--\eqref{27} has a solution $(U,V,\varphi )$, then this solution satisfies \eqref{15.1'} and \eqref{28} for all $t\in [0,T]$.
\label{p1}
\end{proposition}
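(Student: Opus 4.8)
The plan is to use only the magnetic block of the plasma system \eqref{24}, which is the induction equation \eqref{5}, $\partial_tH-\nabla\times(v\times H)=0$, written in the straightened variables. Since this equation has exactly the same form as in the nonrelativistic case, the Lorentz factor and the $4$-vector $b$ play no role here, and the argument of \cite{T09} carries over; I would reproduce it as follows. First I note that the change of variables $\tilde x^1=x^1-\varphi$ is divergence-compatible: its Jacobian determinant equals $1$ and $N=(1,-\partial_2\varphi,-\partial_3\varphi)=\nabla(x^1-\varphi)$, so ${\sf h}=(H_N,H_2,H_3)$ is the Piola transform of $H$ and $\div{\sf h}$ in the new variables coincides pointwise with $\div H$ in the old ones. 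It therefore suffices to prove that a smooth solution keeps $\div H=0$ in $\Omega^+(t)$ and $H_N=0$ on $\Sigma(t)$, working in the physical variables.

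For \eqref{28} the argument is immediate: applying $\div$ to \eqref{5} and using $\div\,\nabla\times(\cdot)=0$ gives $\partial_t(\div H)=0$ in the interior of $\Omega^+(t)$. Hence $\div H$ is frozen in time, and $\div H|_{t=0}=0$ forces $\div H\equiv0$, i.e. $\div{\sf h}\equiv0$ for all $t\in[0,T]$.

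For \eqref{15.1'} I would compute the evolution of $H_N=(H,N)=H\cdot\nabla\Phi$ along the moving interface, where $\Phi=x^1-\varphi$ and $\Sigma(t)=\{\Phi=0\}$. Set $g=(\partial_t+v\cdot\nabla)\Phi$; the kinematic condition \eqref{13} says precisely that $g=0$ on $\Sigma(t)$, which also makes the field $\partial_t+v\cdot\nabla$ tangent to $\Sigma(t)$. Writing \eqref{5} as $(\partial_t+v\cdot\nabla)H=(H\cdot\nabla)v-H\,\div v+v\,\div H$ and using the identity $(\partial_t+v\cdot\nabla)\nabla\Phi=\nabla g-(\nabla v)^{\sf T}\nabla\Phi$, I find that the two copies of $[(H\cdot\nabla)v]\cdot\nabla\Phi$ cancel and
\[
(\partial_t+v\cdot\nabla)H_N=-(\div v)\,H_N+(\div H)(v\cdot\nabla\Phi)+H\cdot\nabla g .
\]
The middle term drops by the first part; and since $g$ vanishes identically along $\Sigma(t)$, its gradient there is normal to $\Sigma(t)$, $\nabla g=\mu\,\nabla\Phi$, so that $H\cdot\nabla g=\mu\,H_N$ is itself proportional to $H_N$. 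Thus on $\Sigma(t)$ the quantity $H_N$ obeys a \emph{homogeneous} first-order transport equation $(\partial_t+v\cdot\nabla)H_N=(\mu-\div v)H_N$ with smooth coefficients, intrinsic to the interface. With $H_N|_{t=0}=0$, integration along characteristics (or a one-line energy estimate) gives $H_N\equiv0$, i.e. \eqref{15.1'}, for all $t\in[0,T]$.

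The routine part is the vector-calculus bookkeeping; the one genuinely delicate point—and the step I would flag as the main obstacle—is the term $H\cdot\nabla g$. After restriction to $\Sigma(t)$ only the tangential derivatives of $g$ are known to vanish, and a naive treatment leaves an uncontrolled normal derivative; the resolution is the geometric observation that $\nabla g$ is \emph{parallel to the normal} $N$ on $\Sigma(t)$ (because $g$ vanishes identically there), which converts this term into a multiple of $H_N$ and closes the transport equation. The same three ingredients—cancellation of the stretching term, the preserved constraint $\div H=0$, and $g=0$ from \eqref{13}—are exactly what is needed, so by the divergence-compatibility noted above no separate computation in the $\tilde x$-variables is required.
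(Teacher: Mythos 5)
Your second step (the geometry of $H_N$ on the moving interface: the cancellation of the stretching terms, and the observation that $\nabla g$ is parallel to $N$ because $g$ vanishes identically on $\Sigma(t)$) is correct, and so is the pointwise identification of $\div{\sf h}$ in the straightened variables with $\div H$ in the physical ones. But the first step contains a genuine gap, and it contaminates the second. You take the \emph{conservative} induction equation \eqref{5} as the equation satisfied by the solution. However, a solution of \eqref{24} satisfies the symmetrized system \eqref{8}, whose magnetic-field rows are the \emph{nonconservative} equation $\partial_tH+(v,\nabla)H-(H,\nabla)v+H\,\div v=0$ (this is exactly what its straightened form \eqref{41} records); as differential operators this differs from \eqref{5} by the term $v\,\div H$. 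So invoking \eqref{5} presupposes the very constraint you are propagating --- the argument is circular. Taking $\div$ of the equation actually satisfied gives not $\partial_t(\div H)=0$ but the material transport equation $\partial_t(\div H)+(v,\nabla)(\div H)+(\div H)\,\div v=0$, i.e., in the straightened variables $\partial_tF+(w,\nabla F)+F\,\div w=0$ with $w=(v_N-\partial_t\varphi,v_2,v_3)$ --- precisely the structure visible in the linearized analogue \eqref{57}.

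This distinction is not cosmetic, because your ``frozen in time'' conclusion fails on a moving domain. From $\partial_t(\div H)=0$ at fixed $x$ you can conclude $\div H(t,x)=0$ only for points that lie in the plasma region at all earlier times; when the plasma expands --- the case \eqref{expan1}, \eqref{expan} that the paper is chiefly interested in --- points of $\Omega^+(t)$ near $\Sigma(t)$ were in the vacuum at $t=0$ and carry no initial datum. Equivalently, in the fixed half-space your identity reads $\partial_tF-\partial_t\varphi\,\partial_1F=0$, which has \emph{incoming} characteristics at $x^1=0$ when $\partial_t\varphi<0$, so $F|_{t=0}=0$ does not determine $F$. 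The transport field $w$ of the correct equation is instead tangent to the boundary, $w_1|_{x^1=0}=v_N-\partial_t\varphi=0$ by the kinematic condition \eqref{13}, so no boundary condition is needed in either case; this tangency, not the Jacobian bookkeeping, is the real mechanism of the proof in the reference the paper cites for Proposition \ref{p1}, and your reproduction of it replaces that mechanism by one that breaks exactly in the physically relevant regime. Your $H_N$ computation inherits the gap through the middle term $(\div H)(v\cdot\nabla\Phi)$, since $v\cdot\nabla\Phi=\partial_t\varphi\neq0$ on $\Sigma(t)$. Ironically, had you started from the nonconservative equation contained in \eqref{24}, the term $v\,\div H$ would be absent from the outset, that middle term would never appear, and your otherwise correct boundary computation would close on its own, in parallel with the boundary-restriction argument used for $\mathcal{H}_N$ in the proof of Proposition \ref{p2} and with \eqref{58} in the linearized setting.
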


In the proof of an analogous fact (see Proposition \eqref {p2} below) for the condition
\begin{equation}
\mathcal{H}_N|_{x^1=0}=0\label{15.2'}
\end{equation}
and equations \eqref{11} written in the straightened variables,
\begin{equation}
{\rm div}^- \,\mathfrak{h}=0,\quad {\rm div}^-\, \mathfrak{e} =0,\label{29}
\end{equation}
where $\mathfrak{h}=(\mathcal{H}_N,\mathcal{H}_2,\mathcal{H}_3)$, $\mathfrak{e}=({E}_N,E_2,E_3)$, $E_N=(E,N)$, and ${\rm div}^-\,a=-\partial_1a_1+\partial_2a_2+\partial_3a_3$ for any vector $a=(a_1,a_2,a_3)$, we should distinguish between two cases. In the first case
\begin{equation}
\partial_t\varphi \leq 0\label{expan1}
\end{equation}
and the strict inequality in \eqref{expan1} corresponds to the plasma expansion. In the opposite case
\begin{equation}
\partial_t\varphi > 0\label{shrink}
\end{equation}
we have shrinkage of the vacuum region. For the most general mixed case when for some parts of the interface the plasma expands into the vacuum and for other parts the vacuum expands into the plasma we below show that the boundary $x^1=0$ for the Maxwell system \eqref{25} is of {\it variable multiplicity}. We do not consider this very difficult case. At the same time, since, as already noted in Sect. 1, in our future local-in-time existence theorem we will consider the interface locally in space (for compactly supported initial data), it is reasonable to assume that one of the conditions in \eqref{expan1} and \eqref{shrink} is satisfied for the whole interface.

We now show that the boundary matrix $-\widetilde{B}_1(\varphi )$ for system \eqref{25} has different signatures for cases \eqref{expan1} and \eqref{shrink}. We can directly calculate its eigenvalues:
\[
\lambda_{1,2}= \partial_t\varphi +
\sqrt{1 +(\partial_2\varphi)^2+(\partial_3\varphi)^2},
\quad
\lambda_{3,4}= \partial_t\varphi-
\sqrt{1 +(\partial_2\varphi)^2+(\partial_3\varphi)^2},\qquad
\lambda_{5,6} =\partial_t\varphi .
\]
Since the interface speed should be less than the speed of the light, i.e.,
\[
\frac{|\partial_t\varphi |}{\sqrt{1 +(\partial_2\varphi)^2+(\partial_3\varphi)^2}} <1,
\]
the eigenvalues $\lambda_{1,2} >0$ and $\lambda_{3,4} <0$ for case \eqref{expan1} and $\lambda_{1,2} <0$ and $\lambda_{3,4} >0$ for case \eqref{shrink}. Note that we even have the more restrictive inequality
\begin{equation}
|\partial_t\varphi |<1
\label{36}
\end{equation}
because the first boundary condition in \eqref{26} for the particular case $v_2|_{x^1=0}=v_3|_{x^1=0}=0$ reads $\partial_t\varphi =v_1|_{x^1=0}$.

Let us count the number of incoming characteristics for systems \eqref{24} and \eqref{25}.
The simplest way to do this for the RMHD system \eqref{24} is to write down the quadratic form with the boundary matrix $\widetilde{A}_1({U},\varphi )$ on the boundary $x^1=0$ computed on a given basic state $(\widehat{U},\hat{\varphi})$ (``unperturbed flow'') about which we will linearize system \eqref{24}. Moreover, later on we will also need this quadratic form for deriving a priori estimates for the linearized problem by the energy method. We assume that the basic state satisfies \eqref{15.1'} and the first boundary condition  in \eqref{26}. In fact, because of this assumption the boundary $x^1=0$ is {\it characteristic} for system \eqref{24} and we now prove this.

It is convenient to use the following representations for the symmetric matrices $A_j$ in \eqref{8}:
\[
A_j = v_jA_0 + G_j,
\]
with
\[
G_j=\left(
\begin{array}{ccc}
0 & e_j^{\sf T}-v_jv^{\sf T} &0 \\[6pt]
e_j-v_jv & \mathcal{G}_j &{\mathcal{N}_j}^{\sf T} \\
0& \mathcal{N}_j & 0
\end{array}\right)
\]
and
\begin{multline*}
\mathcal{G}_j= v_j\left\{ 2\frac{B^2}{\Gamma}\,v\otimes v -\frac{(v,H)}{\Gamma}
\bigl( v\otimes H + H\otimes v\bigr)\right\}\\
+\frac{H_j}{\Gamma}\left\{ \frac{1}{\Gamma^2}
\bigl( v\otimes H + H\otimes v\bigr) -2(v,H)(I-v\otimes v) \right\}
\\
+\frac{(v,H)}{\Gamma}\left( H\otimes e_j + e_j \otimes H\right) -\frac{B^2}{\Gamma}
\left( v\otimes e_j + e_j \otimes v\right).
\end{multline*}
Then, taking into account the first boundary condition  in \eqref{26}, we have
\[
\mathcal{A}:=\widetilde{A}_1(\widehat{U},\hat{\varphi})|_{x^1=0}={G}_1(\widehat{U})|_{x^1=0}-{G}_2(\widehat{U})|_{x^1=0}\partial_2\hat{\varphi}-{G}_3(\widehat{U})|_{x^1=0}\partial_3\hat{\varphi}.
\]
Using \eqref{15.1'}, after some algebra we obtain
\begin{equation}
(\mathcal{A}U,U)=2\widehat{\Gamma}qv_N|_{x^1=0},
\label{35}
\end{equation}
where
\[
v_N=(v,\widehat{N}),\quad \widehat{N}=(1,-\partial_2\hat{\varphi},-\partial_3\hat{\varphi}),\quad
q= p +\frac{1}{2}\,\delta B^2,
\]
$v$ is the perturbation of $u/\Gamma$, and $\delta B^2$ is the perturbation of the electromagnetic pressure $B^2$:
\begin{equation}
v=\frac{1}{\widehat{\Gamma}}\left(u-(\hat{v},u)\hat{v}\right),\quad \delta B^2=\frac{2}{\widehat{\Gamma}}
\left( (\hat{b},H) +(\hat{v},\widehat{H})(\widehat{H},u)-\widehat{B}^2(\hat{v},u)\right).
\label{v,B}
\end{equation}
Here all the ``hat'' values are calculated on the basic state, e.g.,
$\hat{v}=\hat{u}/\widehat{\Gamma}$, $\hat{b}=\widehat{H}/\widehat{\Gamma}+(\hat{u},\widehat{H})\hat{v}$, etc.

Let $W=(q,\widehat{\Gamma}v_N,u_2,u_3,H,S)$ and $U= JW$. Clearly, $\det J\neq 0$. It follows from \eqref{35} that
\begin{equation}
(\mathcal{A}U,U)= (J^{\sf T}\mathcal{A}J\,W,W)=
(\mathcal{E}_{12}W,W),\qquad \mathcal{E}_{12}=\left(\begin{array}{ccccc}
0& 1 &0 & \cdots & 0 \\
1 & 0 &0 & \cdots & 0 \\
0 & 0 &0 & \cdots & 0 \\
\vdots & \vdots &\vdots & & \vdots \\
0& 0 &0 & \cdots & 0  \end{array}
 \right).
\label{35'}
\end{equation}
Hence, the matrix $\mathcal{A}$ has one positive (``incoming'') and one negative eigenvalue, and other eigenvalues are zeros.

In case \eqref{expan1} system \eqref{25} has two positive eigenvalues, and the whole system \eqref{24}, \eqref{25} for $U$ and $V$ has three incoming characteristics.  In this case the number of boundary conditions in \eqref{26} is correct because one of them is needed for determining the function $\varphi (t,x')$. If, however, the vacuum region expands, i.e., condition \eqref{shrink} holds, than the correct number of boundary conditions is six, but we have only four in \eqref{26}. It means that problem \eqref{24}--\eqref{27} is formally {\it underdetermined} and we miss two boundary conditions. However, if as these two missing boundary conditions we take
\begin{equation}
{\rm div}^-\, \mathfrak{h}|_{x^1=0} =0\qquad\mbox{and}\qquad
{\rm div}^-\, \mathfrak{e}|_{x^1=0} =0,
\label{add.bound}
\end{equation}
then, as for case \eqref{expan1}, we can show that equations \eqref{29} are just divergence constraints on the initial data \eqref{27}. Namely, the following proposition takes place.

\begin{proposition}
Let the initial data \eqref{27} satisfy \eqref{15.2'} and \eqref{29}. If problem \eqref{24}--\eqref{27} has a solution $(U,V,\varphi )$ with property \eqref{expan1}, then this solution satisfies \eqref{15.2'} and \eqref{29} for all $t\in [0,T]$. If problem \eqref{24}--\eqref{27} with the two additional boundary conditions \eqref{add.bound} has a solution $(U,V,\varphi )$ with property \eqref{shrink}, then this solution again satisfies \eqref{15.2'} and \eqref{29} for all $t\in [0,T]$.
\label{p2}
\end{proposition}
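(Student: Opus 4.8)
The plan is to mimic the proof of Proposition \ref{p1} (and the current-vortex-sheet analysis it refers to): rather than imposing \eqref{15.2'} and \eqref{29} as genuine boundary/interior conditions, I show that each of these quantities satisfies a homogeneous first-order transport equation for which vanishing Cauchy data forces the quantity to vanish on $[0,T]$. The crucial point, which dictates the split into the two cases \eqref{expan1} and \eqref{shrink}, is that the interior constraints \eqref{29} are transported in the $x^1$-direction with speed $\partial_t\varphi$, so the sign of $\partial_t\varphi$ decides whether characteristics enter the domain through $\{t=0\}$ alone or also through the boundary $\{x^1=0\}$.

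First I treat the divergence constraints \eqref{29}. Since the straightening of $\Sigma$ is an exact diffeomorphism, the identity $\partial_t(\div\mathcal{H})=\partial_t(\div E)=0$, which follows from the Maxwell equations \eqref{10} in the unstraightened frame, transforms into the scalar transport equations
\[
\partial_t({\rm div}^-\mathfrak{h})+\partial_t\varphi\,\partial_1({\rm div}^-\mathfrak{h})=0,\qquad
\partial_t({\rm div}^-\mathfrak{e})+\partial_t\varphi\,\partial_1({\rm div}^-\mathfrak{e})=0
\]
in $[0,T]\times\mathbb{R}^3_+$; the same equations can be read off directly from \eqref{25} using the block structure of $B_2,B_3,\widetilde B_1(\varphi)$ (compare the representation $R_1\div\mathcal{H}+R_2\div E=\sum_j\mathcal{K}_j\partial_jV$ used in \eqref{22}). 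The characteristics satisfy ${\rm d}x^1/{\rm d}t=\partial_t\varphi$. In the expansion case \eqref{expan1}, where $\partial_t\varphi\le0$, every backward characteristic issued from an interior point $x^1>0$ moves to larger $x^1$ and meets $\{t=0\}$ inside $\mathbb{R}^3_+$; since the data vanish there by hypothesis, ${\rm div}^-\mathfrak{h}\equiv{\rm div}^-\mathfrak{e}\equiv0$ and no boundary condition is needed. In the shrinkage case \eqref{shrink}, where $\partial_t\varphi>0$, some backward characteristics reach $\{x^1=0\}$ at a positive time, so to close the argument I also need the boundary traces of the two divergences to vanish; this is exactly the content of the two extra boundary conditions \eqref{add.bound}, whose appearance is thereby explained and which are moreover consistent with the count of positive eigenvalues of $-\widetilde B_1(\varphi)$ recorded above (four in the shrinkage case against two in the expansion case).

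Next I treat \eqref{15.2'}. Writing $b(t,x'):=\mathcal{H}_N|_{x^1=0}$ and differentiating along the boundary, I use the $\mathcal{H}$-block of \eqref{25}, i.e. the straightened form of the first Maxwell system in \eqref{10}, to replace the time derivatives of $\mathcal{H}$, and then substitute the two boundary relations \eqref{16} (together with $\partial_t\varphi=v_N$ from \eqref{13}) to eliminate $E_2,E_3$ on $\{x^1=0\}$. After reorganizing the tangential derivatives, the inhomogeneous terms are expected to cancel, leaving a homogeneous transport equation for $b$ in the tangential variables $(t,x')$, possibly fed by the already-established vanishing of the boundary traces of \eqref{29}. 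Because $x'$ ranges over all of $\mathbb{R}^2$, uniqueness for this transport equation and $b|_{t=0}=0$ give $b\equiv0$ on $[0,T]$, i.e. \eqref{15.2'} holds; this part is insensitive to the sign of $\partial_t\varphi$, which is why \eqref{15.2'} propagates identically in both cases. For this reason I would establish the divergence constraints first and only then the boundary constraint.

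The main obstacle is precisely the cancellation of the source terms in the equation for $b$: it must be checked that, upon inserting \eqref{16} and using $|\partial_t\varphi|<1$ from \eqref{36}, every term not proportional to $b$ or to its tangential derivatives (in particular the terms produced by the $\varphi$-dependence of $N$ and by the curl structure) disappears, so that the transport equation is genuinely homogeneous. This is the delicate, structure-dependent computation; the transport arguments and the characteristic bookkeeping for \eqref{29} are then routine.
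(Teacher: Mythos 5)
Your proposal is correct and follows essentially the same route as the paper's proof: applying ${\rm div}^-$ to the straightened systems \eqref{30}, \eqref{31} yields the scalar transport equation $\partial_tF+\partial_t\varphi\,\partial_1F=0$ for $F={\rm div}^-\,\mathfrak{h}$ and $F={\rm div}^-\,\mathfrak{e}$, with the sign of $\partial_t\varphi$ deciding whether the boundary traces must additionally be killed by \eqref{add.bound}, after which the $\mathcal{H}_N$-component of \eqref{30} restricted to $x^1=0$, combined with the last two boundary conditions in \eqref{26}, propagates \eqref{15.2'}. The cancellation you flagged as the main obstacle goes through exactly as you anticipated: on the boundary the equation collapses to $\partial_t\mathcal{H}_N-\partial_t\varphi\,{\rm div}^-\,\mathfrak{h}=0$, so the already-established constraints \eqref{29} render it homogeneous ($\partial_t\mathcal{H}_N|_{x^1=0}=0$), confirming your ordering of the two steps.
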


\begin{proof}
System \eqref{25} can be rewritten as follows
\begin{align}
& \partial_t\mathfrak{h}+\nabla^-\times \mathfrak{E}+\partial_t\varphi\,\partial_1\mathfrak{h} + B(\varphi )\mathfrak{h}=0,\label{30}  \\
& \partial_t\mathfrak{e}-\nabla^-\times \mathfrak{H}+\partial_t\varphi\,\partial_1\mathfrak{e}+ B(\varphi )\mathfrak{e}=0 , \label{31}
\end{align}
where
\[
\mathfrak{H}=(\mathcal{H}_1,\mathcal{H}_{\tau_2},\mathcal{H}_{\tau_3}),\quad
\mathfrak{E}=(E_1,E_{\tau_2},E_{\tau_3}),
\]
\[
{\mathcal{H}}_{\tau_i}={\mathcal{H}}_1\partial_i\varphi +{\mathcal{H}}_i,\quad i=2,3,\qquad
B=B(\varphi )=\left(
\begin{array}{ccc}
0 & \partial_t\partial_2\varphi & \partial_t\partial_3\varphi \\[2pt]
0 & 0 & 0\\
0 & 0 & 0
\end{array}
\right),
\]
and the operator $\nabla^-\times$ differs from the usual curl operator only by the fact that $\nabla^- =(-\partial_1,\partial_2,$ $\partial_3)$. Applying the operator ${\rm div}^-$ to \eqref{30} and \eqref{31}, we get the equation
\[
\partial_tF +\partial_t\varphi\, \partial_1F =0\quad\mbox{in}\ [0,T]\times \mathbb{R}^3_+
\]
for $F={\rm div}^-\,\mathfrak{h}\;$ and $F={\rm div}^-\,\mathfrak{e}$. In case \eqref{expan1} this equation has no incoming characteristics and, therefore, it does not need boundary conditions. Hence, if $F|_{t=0}=0$, then $F=0$ for all $t\in [0,T]$. This yields \eqref{29}. In case \eqref{shrink}, in view of the boundary conditions \eqref{add.bound}, we have $F|_{x^1=0}=0$. Then, we again deduce \eqref{29} if $F|_{t=0}=0$.

The first equation in \eqref{30} reads
\[
\partial_t\mathcal{H}_N+\partial_t\varphi\,\partial_1\mathcal{H}_N+\partial_2E_{\tau_3}-
\partial_3E_{\tau_2}+\mathcal{H}_2\partial_t\partial_2\varphi +\mathcal{H}_3\partial_t\partial_3\varphi =0.
\]
Considering it on the boundary $x^1=0$ and using the last two boundary conditions in \eqref{26}, we obtain
\[
\partial_t\mathcal{H}_N -\partial_t\varphi\,{\rm div}^- \,\mathfrak{h} =0\quad\mbox{at}\ x^1=0.
\]
In view of \eqref{29}, this implies $\partial_t\mathcal{H}_N|_{x^1=0}=0$. That is, we get \eqref{15.2'} for all $t\in [0,T]$ if $\mathcal{H}_N|_{x^1=0}=0$ for $t=0$.
\end{proof}

Below without loss of generality we consider the case \eqref{expan1}. The proof of the basic a priori estimate for the linearized problem in Sect. \ref{sec:6} and \ref{sec:8} for the opposite case \eqref{shrink} stays the same as for \eqref{expan1}. However, since the additional boundary conditions \eqref{add.bound} are not quite standard, the preparatory arguments in Sect. \ref{sec:5} should be modified and the future proof of the existence of solutions will be diferent from that for case \eqref{expan1}. Anyway, in this paper we restrict ourselves to case \eqref{expan1}. Moreover, it is natural to assume that the ``stable'' counterpart
\begin{equation}
\partial_t\varphi < 0\label{expan}
\end{equation}
of condition \eqref{expan1} holds. In particular, in our future local-in-time existence theorem we will consider initial data satisfying condition \eqref{expan}. We note also that under assumption \eqref{expan} the boundary $x^1=0$ is noncharacteristic for system \eqref{25}.\footnote{Even if $\partial_t\varphi = 0$, the boundary is characteristic for \eqref{25} only formally because,
in view of \eqref{29}, we have the full control on derivatives of $V$ in the normal direction.}

\section{Linearized problem
and main result}
\label{sec:4}

Let
\begin{equation}
(\widehat{U}(t,x ),\widehat{V}(t,x ),\hat{\varphi}(t,{x}'))
\label{37}
\end{equation}
be a given sufficiently smooth vector-function with $\widehat{U}=(\hat{p},\hat{u},\widehat{H},\widehat{S})$, $\widehat{V}=(\widehat{\mathcal{H}},\widehat{E})$, and
\begin{equation}
\|(\widehat{U},\widehat{V})\|_{W^2_{\infty}(\Omega_T)}+
\|\partial_1(\widehat{U},\widehat{V})\|_{W^2_{\infty}(\Omega_T)}+
\|\hat{\varphi}\|_{W^3_{\infty}(\partial\Omega_T)} \leq K,
\label{38}
\end{equation}
where $K>0$ is a constant,
\[
\Omega_T:= (-\infty, T]\times\mathbb{R}^3_+,\quad \partial\Omega_T:=(-\infty ,T]\times\{x^1=0\}\times\mathbb{R}^{2},
\]
and below all the ``hat'' values will be related to the basic state \eqref{36}, e.g.,
$\hat{v}=\hat{u}/\widehat{\Gamma}$, $\widehat{\Gamma}^2=1+|\hat{u}|^2$, $\hat{a}^2=1/\rho_p(\hat{p},\widehat{S})$, etc.

We assume that the basic state \eqref{37} satisfies the hyperbolicity condition \eqref{9} in $\overline{\Omega_T}$,
\begin{equation}
\rho (\hat{p},\widehat{S}) >0,\quad \rho_p(\hat{p},\widehat{S}) >0 ,\quad 0 <\hat{c}_s^2<1,
\label{39}
\end{equation}
together with such natural assumptions as $|\hat{v}|<1$, inequality \eqref{36} for $\hat{\varphi}$, etc.
We also assume that \eqref{37} satisfies the boundary conditions in \eqref{26} except the second one on $\partial\Omega_T$,
\begin{equation}
\hat{v}_N|_{x^1=0} =\varkappa ,\quad \widehat{E}_{\tau_2}|_{x^1=0}=
\varkappa\widehat{\mathcal{H}}_3|_{x^1=0} , \quad \widehat{E}_{\tau_3}|_{x^1=0}=-
\varkappa\widehat{\mathcal{H}}_2|_{x^1=0},
\label{40}
\end{equation}
and the interior equations for $\widehat{H}$ and $\widehat{\mathcal{H}}$ in ${\Omega_T}$ containing in \eqref{24} and \eqref{25}:
\begin{equation}
\partial_t\widehat{H}+ (\hat{w} ,\nabla )
\widehat{H} - (\hat{\sf h} ,\nabla ) \hat{v} + \widehat{H}\div\hat{w} =0,
\label{41}
\end{equation}
\begin{equation}
\partial_t\hat{\mathfrak{h}}+\varkappa \,\partial_1\hat{\mathfrak{h}}+\nabla^-\times \widehat{\mathfrak{E}} + B(\hat{\varphi} )\hat{\mathfrak{h}}=0,
\label{42}
\end{equation}
where $\varkappa =\partial_t\hat{\varphi},\quad \hat{w}=(\hat{v}_N- \varkappa ,\hat{v}_2,\hat{v}_3)$. At last, we assume that the basic state satisfies condition \eqref{expan}. More precisely, let
\begin{equation}
\varkappa \leq -\delta <0
\label{expan.lin}
\end{equation}
on $\partial\Omega_T$ with a fixed constant $\delta$. Recall also that $|\varkappa |<1$, cf. \eqref{36}.

Later on, for the linearized problem we will need equations associated to the nonlinear constraints \eqref{15.1'}, \eqref{28}, \eqref{15.2'}, and \eqref{29}. However, to deduce them it is not enough that these constraints are satisfied by the basic state \eqref{37}. For the plasma magnetic field, as in \cite{T09}, we need \eqref{41}, and for the same reason for the vacuum magnetic field we have to use \eqref{42}.
If we assume that the basic state satisfies
\begin{equation}
\widehat{H}_N|_{x^1=0}=0,\quad \widehat{\mathcal{H}}_N|_{x^1=0}=0,\label{43}
\end{equation}
\begin{equation}
\div \hat{\sf h}=0,\quad {\rm div}^- \,\hat{\mathfrak{h}}=0\label{44}
\end{equation}
for $t=0$, then it follows from the proofs of Propositions \ref{p1} (see \cite{T09}) and \ref{p2} that conditions \eqref{43} are true on $\partial\Omega_T$ and equations \eqref{44} hold in ${\Omega_T}$. Thus, without loss of generality for the basic state we also require the fulfillment of \eqref{43} on $\partial\Omega_T$ and \eqref{44} in $\Omega_T$.

Conditions \eqref{43} and the first condition in \eqref{40} endow the boundary matrix
\[
\diag (\widetilde{A}_1(\widehat{U},\hat{\varphi}),-\widetilde{B}_1(\hat{\varphi}))|_{x^1=0}
\]
for the linearized problem with the same structure as for the boundary matrix at $x^1=0$ for the nonlinear problem \eqref{24}--\eqref{27}. Concerning the second and third conditions in \eqref{40}, we will need them for deducing a linearized version of \eqref{15.2'} for solutions of the linearized problem.

\begin{remark}{\rm
Assumptions \eqref{39}--\eqref{expan.lin} are nonlinear constraints on the basic state. In the forthcoming nonlinear analysis we plan to use the Nash-Moser method. As in \cite{CS,T09,T09.cpam}, the Nash-Moser procedure will be not completely standard. Namely, at each $n$th Nash-Moser iteration step we will have to construct an intermediate state $(U_{n+1/2},V_{n+1/2},\varphi_{n+1/2})$ satisfying constraints  \eqref{39}--\eqref{expan.lin} (and \eqref{43}, \eqref{44} for $t=0$). For the ``plasma'' part of the problem such an intermediate state can be constructed in exactly the same manner as in \cite{T09} and for the ``vacuum'' component $V_{n+1/2}$ the construction is also not difficult. We omit corresponding arguments and postpone them to the nonlinear analysis.}
\label{r1}
\end{remark}

The linearized equations for \eqref{24}--\eqref{26} read:
\[
\mathbb{P}'(\widehat{U},\hat{\varphi})(\delta U,\delta\varphi ):=
\frac{\rm d}{{\rm d}\varepsilon}\mathbb{P}(U_{\varepsilon},\varphi_{\varepsilon})|_{\varepsilon =0}=f_{\rm I}
\quad \mbox{in}\ \Omega_T,
\]
\[
\mathbb{V}'(\widehat{V},\hat{\varphi})(\delta V,\delta\varphi ):=
\frac{\rm d}{{\rm d}\varepsilon}\mathbb{V}(V_{\varepsilon},\varphi_{\varepsilon})|_{\varepsilon =0}=f_{\rm II}
\quad \mbox{in}\ \Omega_T,
\]
\[
\mathbb{B}'(\widehat{U},\widehat{V},\hat{\varphi})(\delta U,\delta V,\delta \varphi ):=
\frac{\rm d}{{\rm d}\varepsilon}\mathbb{B}(U_{\varepsilon},V_{\varepsilon},\varphi_{\varepsilon})|_{\varepsilon =0}={g}
\quad \mbox{on}\ \partial\Omega_T,
\]
where $U_{\varepsilon}=\widehat{U}+ \varepsilon\,\delta U$, $V_{\varepsilon}=
\widehat{V}+\varepsilon\,\delta V$, and $\varphi_{\varepsilon}=\hat{\varphi}+ \varepsilon\,\delta \varphi$. Here we introduce the source terms $f=(f_{\rm I}, f_{\rm II})=(f_1,\ldots ,f_{14})$ and $g=(g_1,\ldots ,g_4)$ to make the interior equations and the boundary conditions inhomogeneous. We easily compute the exact form of the linearized equations (below we drop $\delta$):
\[
\mathbb{P}'(\widehat{U},\hat{\varphi})(U,\varphi )
=
L(\widehat{U},\hat{\varphi})U +{\cal C}(\widehat{U},\hat{\varphi})
U -   \bigl\{L(\widehat{U},\hat{\varphi})\varphi\bigr\}\partial_1\widehat{U}=f_{\rm I},
\]
\[
\mathbb{V}'(\widehat{V},\hat{\varphi})(V,\varphi )
=
M(\hat{\varphi})V  +   \bigl\{M(\hat{\varphi})\varphi\bigr\}\partial_1\widehat{V}=f_{\rm II},
\]
\[
\mathbb{B}'(\widehat{U},\widehat{V},\hat{\varphi})(U,V,\varphi )=
{\left(
\begin{array}{c}
v_{N}-\partial_t\varphi -\hat{v}_2\partial_2\varphi-\hat{v}_3\partial_3\varphi \\[3pt]
q-(\widehat{\mathcal{H}},\mathcal{H})+(\widehat{E},E)\\[3pt]
E_{\tau_2} -\widehat{\mathcal{H}}_3\partial_t\varphi -\varkappa\mathcal{H}_3+\widehat{E}_1\partial_2\varphi\\[3pt]
E_{\tau_3}+\widehat{\mathcal{H}}_2\partial_t\varphi +\varkappa\mathcal{H}_2+\widehat{E}_1\partial_3\varphi
\end{array}
\right)_{\bigl|}}_{x^1=0}=g,
\]
where
\[
q=p+ \delta B^2,\quad  v_{N}= v_1-v_2\partial_2\hat{\varphi}-v_3\partial_3\hat{\varphi},
\quad \quad E_{\tau_j}=E_1\partial_j\hat{\varphi}+E_j,
\]
$v$ and $\delta B^2$ are written down in \eqref{v,B},
and the matrix ${\cal C}(\widehat{U},\hat{\varphi})$ is determined as follows:
\begin{multline*}
{\cal C}(\widehat{U},\hat{\varphi})Y
= (Y ,\nabla_yA_0(\widehat{U} ))\partial_t\widehat{U}
 +(Y ,\nabla_y\widetilde{A}_1(\widehat{U},\hat{\varphi}))\partial_1\widehat{U}
 \\
+ (Y ,\nabla_yA_2(\widehat{U} ))\partial_2\widehat{U}
+ (Y ,\nabla_yA_3(\widehat{U} ))\partial_3\widehat{U},
\end{multline*}
\[
(Y ,\nabla_y A(\widehat{U})):=\sum_{i=1}^8y_i\left.\left(\frac{\partial A (Y )}{
\partial y_i}\right|_{Y =\widehat{U}}\right),\quad Y =(y_1,\ldots ,y_8).
\]

The differential operators $\mathbb{P}'(\widehat{U},\hat{\varphi})$ and $\mathbb{V}'(\widehat{V},\widehat{\varphi})$ are first-order operators in $\varphi$ that gives some trouble in obtaining  a  priori estimates. As in \cite{CS,T05,T09,T09.cpam}, following Alinhac \cite{Al}, we introduce the ``good unknowns''
\begin{equation}
\dot{U}:=U -\varphi\,\partial_1\widehat{U},\qquad \dot{V}:=V +\varphi\,\partial_1\widehat{V}.
\label{45}
\end{equation}
Omitting detailed calculations, we rewrite our linearized interior equations in terms of the new unknowns \eqref{45}:
\begin{equation}
L(\widehat{U},\hat{\varphi})\dot{U} +{\cal C}(\widehat{U},\hat{\varphi})
\dot{U} + \varphi\,\partial_1\bigl\{\mathbb{P}
(\widehat{U},\hat{\varphi})\bigr\}=f_{\rm I},
\label{46}
\end{equation}
\begin{equation}
M(\hat{\varphi})\dot{V}  - \varphi\,\partial_1\bigl\{\mathbb{V}
(\widehat{V},\hat{\varphi})\bigr\}=f_{\rm II},
\label{47}
\end{equation}
As in \cite{Al,CS,T09,T09.cpam}, we now drop the zero-order terms in $\varphi$ in \eqref{46} and \eqref{47}\footnote{In the future nonlinear analysis the dropped terms in \eqref{46} and \eqref{47} should be considered as error terms at each Nash-Moser iteration step.} and consider so-called effective linear operators
\[
\mathbb{P}'_e(\widehat{U},\hat{\varphi})\dot{U}:=
 L(\widehat{U},\hat{\varphi})\dot{U} +{\cal C}(\widehat{U},\hat{\varphi})
\dot{U}\quad\mbox{and}\quad \mathbb{V}'_e(\hat{\varphi})\dot{V}:= M(\hat{\varphi})\dot{V}.
\]
We write down the final form of our linearized problem for $(\dot{U},\dot{V},\varphi )$:
\begin{align}
 L(\widehat{U},\hat{\varphi})\dot{U} +{\cal C}(\widehat{U},\hat{\varphi})
\dot{U} =f_{\rm I}\qquad \mbox{in}\ \Omega_T,\label{48}\\
 M(\hat{\varphi})\dot{V}  =f_{\rm II}\qquad \mbox{in}\ \Omega_T,\label{49}
\end{align}
\begin{equation}
\left(
\begin{array}{c}
\dot{v}_{N}-\partial_t\varphi -\hat{v}_2\partial_2\varphi-\hat{v}_3\partial_3\varphi +
\varphi\,\partial_1\hat{v}_{N}\\[3pt]
\dot{q}-(\widehat{\mathcal{H}},\dot{\mathcal{H}})+(\widehat{E},\dot{E})+ [\partial_1\hat{q}]\varphi\\[3pt]
\dot{E}_{\tau_2}-
\partial_t(\widehat{\mathcal{H}}_3\varphi ) -\varkappa\dot{\mathcal{H}}_3+\partial_2(\widehat{E}_1\varphi )\\[3pt]
\dot{E}_{\tau_3}+\partial_t(\widehat{\mathcal{H}}_2\varphi ) +\varkappa\dot{\mathcal{H}}_2+\partial_3(\widehat{E}_1\varphi )
\end{array}
\right)=g \qquad \mbox{on}\ \partial\Omega_T,
\label{50}
\end{equation}
\begin{equation}
(\dot{U},\dot{V},\varphi )=0\qquad \mbox{for}\ t<0,\label{51}
\end{equation}
where
\[
\dot{v}_{\rm N}=\dot{v}_1-\dot{v}_2\partial_2\hat{\varphi}-\dot{v}_3\partial_3\hat{\varphi},\quad
\dot{E}_{\tau_j}=\dot{E}_1\partial_j\hat{\varphi}+\dot{E}_j,
\]
\[
[\partial_1\hat{q}]=(\partial_1\hat{q})|_{x^1=0}-(\widehat{\mathcal{H}},\partial_1\widehat{\mathcal{H}})|_{x^1=0}+(\widehat{E},\partial_1\widehat{E})|_{x^1=0},
\]
and $\dot{v}$ and $\dot{q}$ are determined similarly to $v$ and $q$.
We used the second and third equations in \eqref{42} taken at $x^1=0$ while writing down the last two boundary conditions in \eqref{50}. We assume that $f$ and $g$ vanish in the past and consider the case of zero initial data, which is the usual assumption. We postpone the case of nonzero initial data to the nonlinear analysis (construction of a so-called approximate solution; see, e.g., \cite{T09}).

Before formulating the main result for problem \eqref{48}--\eqref{51} we give the definition of the anisotropic weighted Sobolev spaces $H^m_*$. The functional space $H^m_*$ is defined as follows (see \cite{YM,OSY,Sec} and references therein):
$$
H^{m}_*(\mathbb{R}^3_+ ):=\left\{ u\in L_2(\mathbb{R}^3_+) \ | \
\partial^{\alpha}_*\partial_1^k u\in L_2(\mathbb{R}^3_+ )\quad \mbox{if}\quad |\alpha
|+2k\leq m\, \right\},
$$
where $m\in\mathbb{N}$, $\partial^{\alpha}_*=(\sigma \partial_1)^{\alpha_1}\partial_2^{\alpha_2}
\partial_3^{\alpha_3}\,$, and  $\sigma (x^1)\in C^{\infty}(\mathbb{R}_+)$ is
a monotone increasing function such that $\sigma (x^1)=x^1$ in a neighborhood of
the origin and $\sigma (x^1)=1$ for $x^1$ large enough. The space $H^m_*(\mathbb{R}^3_+ )$ is normed by
\[
\|u\|_{m,*}^2= \sum_{|\alpha |+2k\leq m}
\|\partial^{\alpha}_*\partial_1^k u\|_{L_2(\mathbb{R}^3_+)}^2.
\]
We also define the space
\[
H^m_*(\Omega_T)=\bigcap_{k=0}^{m}H^k((-\infty ,T],H^{m-k}_*(\mathbb{R}^3_+ ))
\]
equipped with the norm
\[
[u]^2_{m,*,T}=\int_{-\infty}^T\nt u (t)\nt^2_{m,*}dt,
\quad \mbox{where}\quad
\nt u (t)\nt^2_{m,*}=\sum\limits_{j=0}^m\|\partial_t^ju(t)\|^2_{m-j,*}.
\]
Within this paper we use the space $H^m_*(\Omega_T)$ mainly for $m=1$. Clearly,
the norm for $H^1_*(\Omega_T)$ reads
\[
[u]^2_{1,*,T}=\int_{\Omega_T}\left( u^2 +(\partial_tu)^2 +(\sigma\partial_1u)^2
+(\partial_2u)^2+(\partial_3u)^2\right) dtdx.
\]

We are now in a position to state the main result of this paper.

\begin{theorem}
Let the basic state \eqref{37} satisfies assumptions \eqref{38}--\eqref{44}. Let also
\begin{equation}
{|\widehat{H}_2\widehat{\mathcal{H}}_3-\widehat{H}_3\widehat{\mathcal{H}}_2|
_{|}}_{x^1=0}
\geq \epsilon> 0,\label{52}
\end{equation}
where $\epsilon$ is a fixed constant. Then there exists a positive constant $\mu^*$ such that
for all $(f,g) \in H^3_*(\Omega_T)\times H^3(\partial\Omega_T)$ which vanish in the past and for all $\mu <\mu^*$, where
\begin{equation}
\mu =
{|\widehat{E}_1+\hat{v}_2\widehat{\mathcal{H}}_3-\hat{v}_3\widehat{\mathcal{H}}_2|
_{|}}_{x^1=0},
\label{53}
\end{equation}
the a priori estimate
\begin{equation}
[\dot{U}]_{1,*,T}+\|\dot{V}\|_{H^{1}(\Omega_T)}+\|\varphi\|_{H^1(\partial\Omega_T)}
\leq C\left\{ [f_{\rm I}]_{3,*,T}+\|f_{\rm II}\|_{H^{3}(\Omega_T)}+
\|g\|_{H^{3}(\partial\Omega_T)}\right\}
\label{54}
\end{equation}
holds for problem \eqref{48}--\eqref{51},
where $C=C(K,T)>0$ is a constant independent of the data $(f,g)$.
\label{t1}
\end{theorem}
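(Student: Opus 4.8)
The plan is to prove \eqref{54} by the energy method, using the secondary symmetrization of Section~\ref{sec:2} to construct a \emph{dissipative $1$-symmetrizer} in the sense of \cite{T06}, following the strategy developed for compressible current-vortex sheets in \cite{T09}. As a preliminary step (the content of Section~\ref{sec:5}) I would subtract a suitable lifting of the data so as to reduce \eqref{48}--\eqref{51} to the case of homogeneous vacuum equations $f_{\rm II}=0$ and homogeneous boundary conditions $g=0$, keeping only the plasma source $f_{\rm I}$. Since under \eqref{expan} the boundary is noncharacteristic for the Maxwell system \eqref{49}, so that $\dot V$ enjoys full normal regularity, such a lifting is available in the usual Sobolev scale; the price is that it consumes extra derivatives of $(f_{\rm II},g)$, which together with the degeneracy of the boundary symmetrizer accounts for the loss of derivatives recorded on the right-hand side of \eqref{54}.

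For constant coefficients I would then run two energy identities in parallel. Multiplying \eqref{48} by $2\dot U$ and integrating over $\Omega_t$, the hyperbolicity $A_0>0$ from \eqref{9}, \eqref{39} yields, after integration by parts in $x^1$, control of $\|\dot U(t)\|^2_{L^2}$ together with a plasma boundary integral whose integrand is $(\mathcal A\dot U,\dot U)|_{x^1=0}=2\widehat\Gamma\,\dot q\,\dot v_N$ by \eqref{35}, with $\dot q,\dot v_N$ as in \eqref{v,B}. For the vacuum I would use the symmetrized form of \eqref{49} built as in \eqref{20}: contracting with $\mathcal B_0$ and invoking \eqref{22} together with the divergence constraints \eqref{29}, the positivity $\mathcal B_0>0$, that is $|\nu|<1$ from \eqref{21}, produces the vacuum energy plus a boundary quadratic form in the trace of $\dot V$. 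The vector $\nu=(\nu_1,\nu_2,\nu_3)$ is the free parameter of the symmetrizer.

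The heart of the argument, and the step I expect to be hardest, is to show that for an appropriate choice of $\nu$ the \emph{total} boundary quadratic form is nonnegative and moreover controls the front $\varphi$. I would substitute the boundary conditions \eqref{50} with $g=0$ to eliminate $\dot q$, $\dot v_N$ and the tangential traces $\dot E_{\tau_2},\dot E_{\tau_3}$ in favour of $\varphi$ and the remaining traces of $\dot V$; using the kinematic relation the $\varphi$-terms can be organized, with the help of \eqref{42}, into the time derivative of a boundary quadratic functional, whose integration in $t$ delivers $\|\varphi\|_{L^2(\partial\Omega_T)}$, plus a residual quadratic form $\mathcal Q$ in the boundary values of $\dot V$. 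It is exactly here that the two structural hypotheses enter: condition \eqref{52}, that $\widehat H$ and $\widehat{\mathcal H}$ are nowhere parallel on $x^1=0$, lets $\nu$ be chosen so that $\mathcal Q\ge 0$, while the smallness $\mu<\mu^*$ of the vacuum electric field \eqref{53} ensures that the perturbation of $\mathcal Q$ by the $\widehat E$-terms does not destroy this sign. Concretely I would exhibit an admissible $\nu$ depending on the traces of $\widehat H,\widehat{\mathcal H}$ and verify positivity by reducing it to the positive definiteness of a fixed finite matrix, as indicated in the introduction; this yields control of $\varphi$ and of the boundary trace of $\dot V$.

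Finally I would upgrade the resulting $L^2$ bound to the anisotropic estimate $[\dot U]_{1,*,T}$ and to $\|\dot V\|_{H^1(\Omega_T)}$ by applying the tangential operators $\partial_t,\partial_2,\partial_3$ and the weighted normal operator $\sigma\partial_1$ to \eqref{48}--\eqref{49} and repeating the energy computation, the commutators being lower order and absorbed by Gronwall. The weight $\sigma$ is forced by the fact that the boundary is characteristic for the plasma system, so no control of $\partial_1\dot U$ at $x^1=0$ is available, whereas the noncharacteristic Maxwell boundary restores the full normal derivative of $\dot V$. The variable-coefficient case of Section~\ref{sec:8} then follows along the same lines, with $\nu=\nu(t,x)$ now functions subject to \eqref{21}, the extra commutators generated by the non-constant state $(\widehat U,\widehat V,\hat\varphi)$ being controlled through \eqref{38} and Gronwall's inequality. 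Collecting the three contributions gives \eqref{54}.
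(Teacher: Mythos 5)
Your overall skeleton matches the paper (reduction to homogeneous $f_{\rm II}$ and $g$ as in Section~\ref{sec:5}, secondary symmetrization, tangential plus weighted-normal derivatives, Gronwall), but the heart of your argument --- the step you yourself flag as hardest --- is not how the proof can go, and the gap is structural. You propose to choose $\nu$, using the nonparallelism condition \eqref{52}, so that the \emph{total boundary quadratic form is nonnegative at the $L^2$ level}, with the smallness of $\mu$ preserving the sign, thereby obtaining $L^2$ control of $\varphi$ and of the trace of $\dot V$, and only afterwards to upgrade to $H^1_*$ by tangential differentiation with ``lower-order commutators absorbed by Gronwall.'' The paper does something essentially different, and for a reason. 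The choice is $\nu=\hat v$ (see \eqref{106}), independent of $\widehat H$ and $\widehat{\mathcal H}$; with this choice the boundary form $\mathcal Q$ is \emph{not} sign-definite --- it collapses to $\hat\mu$ times an exact tangential divergence (modulo $\mathcal Q_0=0$, which vanishes by the Maxwell equations on the boundary), so the zero-order energy inequality \eqref{109} retains the signless term $2\int\hat\mu\,\varphi\,E_1|_{x^1=0}\,{\rm d}x'$ and is, in the paper's own words, useless by itself. Condition \eqref{52} enters not in the choice of $\nu$ but through \eqref{111} and \eqref{113}: it is the ellipticity of the front symbol, which lets $\partial_2\varphi,\partial_3\varphi,\partial_t\varphi$ be expressed through the traces of $H_N$, $\mathcal H_N$ and $v_N$. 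One must first differentiate the systems tangentially, so that the residual boundary terms $\partial_\alpha\varphi\,\partial_\alpha E_1$ become quadratic in traces of the unknowns, then push them into the volume by integration by parts using the noncharacteristic relations \eqref{117}--\eqref{119}; the smallness of $\mu$ finally enters through positive definiteness of the order-$42$ \emph{volume} matrix $\mathfrak A+\hat\mu{\rm Q}$ in \eqref{122}, not through a boundary sign condition. Remark~\ref{r3} states this explicitly: unlike current-vortex sheets, here one must prolong the system to first-order derivatives already at constant coefficients (a dissipative $1$-symmetrizer, not a $0$-symmetrizer). For constant coefficients there are no commutators at all, so the tangential differentiation is the substance of the argument, not a routine upgrade; and the neutral modes exhibited in Section~\ref{sec:7} (weak Kreiss--Lopatinski) indicate that no choice of $\nu$ can make the $L^2$ boundary form strictly dissipative, so your plan of closing an $L^2$ estimate first cannot be executed.

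A secondary inaccuracy: you attribute part of the derivative loss in \eqref{54} to ``degeneracy of the boundary symmetrizer.'' In the paper the reduced problem \eqref{89}--\eqref{92} satisfies the estimate \eqref{97} with \emph{no} loss from the source term $F$ (a point the author emphasizes for the future existence proof); the entire loss recorded in \eqref{54} is generated by the lifting step, via the estimates \eqref{67}--\eqref{88}. Also note that the control of $\varphi$ in $H^1(\partial\Omega_T)$ is obtained from \eqref{111}, \eqref{113}, the first boundary condition in \eqref{101}, and trace theorems (estimate \eqref{133}), not from integrating a boundary quadratic functional in time as you suggest.
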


\section{Reduction to homogeneous boundary conditions and homogeneous Maxwell equations}
\label{sec:5}

Before deriving the basic a priori estimate \eqref{54} we discuss some useful properties of the linearized problem \eqref{48}--\eqref{51} and reduce it to a problem with $f_{\rm II}=0$ and $g=0$. First of all, as for current-vortex sheets \cite{T09}, we can prove the following proposition (see Appendix A in \cite{T09} for its proof for a technically more difficult case when the cut-off function $\chi (x^1)$ is used in the straightening of the front $\Sigma$).

\begin{proposition}
Let the basic state \eqref{37} satisfies assumptions \eqref{38}--\eqref{44}.
Then solutions of problem \eqref{48}--\eqref{51} satisfy
\begin{equation}
{\rm div}\,\dot{\sf h}=f_{15}\quad\mbox{in}\ \Omega_T,
\label{55}
\end{equation}
\begin{equation}
\dot{H}_{N}-\widehat{H}_2\partial_2\varphi -\widehat{H}_3\partial_3\varphi +
\varphi\,\partial_1\widehat{H}_{N}=g_5\quad\mbox{on}\ \partial\Omega_T.
\label{56}
\end{equation}
Here $\dot{\sf h}=(\dot{H}_{N},\dot{H}_2,\dot{H}_3)$, $\dot{H}_{N}=\dot{H}_1-\dot{H}_2\partial_2\hat{\varphi}-\dot{H}_3
\partial_3\hat{\varphi}$,  the new source terms $f_{15}=
f_{15}(t,x )$ and $g_5= g_5(t,x')$, which vanish in the past, are determined by the source terms $f_5$, $f_6$, $f_7$, $g_1$ and the basic state as solutions to the linear inhomogeneous equations
\begin{equation}
\partial_t f_{15}+ (\hat{w} ,\nabla f_{15}) + f_{15}{\rm div}\,\hat{w}={\cal F}\quad\mbox{in}\ \Omega_T,
\label{57}
\end{equation}
\begin{equation}
\partial_t g_5 +\hat{v}_2\partial_2g_5+\hat{v}_3\partial_3g_5+
\left(\partial_2\hat{v}_2+\partial_3\hat{v}_3\right) g_5={\cal G}\quad \mbox{on}\ \partial\Omega_T,
\label{58}
\end{equation}
where
\[
 {\cal F}=\div {f}_{h},\quad {f}_{h}=
(f_{N} ,f_6,f_7),\quad
f_{N}=f_5-f_6\partial_2\hat{\varphi}- f_7\partial_3\hat{\varphi},
\]
\[
{\cal G}=\bigl\{f_{N}+\partial_2\bigl(\widehat{H}_2g_1\bigr)+
\partial_3\bigl(\widehat{H}_3g_1\bigr)\bigr\}\bigr|_{x^1=0}.
\]
\label{p3}
\end{proposition}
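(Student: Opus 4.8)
The plan is to treat the two identities \eqref{55} and \eqref{56} as the linearized analogues of the constraint-propagation statements in Propositions \ref{p1} and \ref{p2}: in each case I would derive a linear transport equation satisfied by the relevant quantity, and then invoke uniqueness for that transport equation together with the vanishing of all unknowns in the past \eqref{51} to identify it with the source term ($f_{15}$ or $g_5$) defined as the solution of \eqref{57}, resp.\ \eqref{58}. The starting point is to isolate from the effective linearized system \eqref{48} the three rows corresponding to the plasma magnetic field. Written in terms of the good unknown and of $\dot{\sf h}=(\dot H_N,\dot H_2,\dot H_3)$, these form a linearized induction system whose principal (curl-type) part has the same structure as the nonlinear equation \eqref{41}, with right-hand side the straightened source $f_h=(f_N,f_6,f_7)$.

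For the interior identity I would apply the straightened divergence operator $\div$ to this linearized induction system. Since the principal part is of curl type, its divergence vanishes up to commutators, and the zeroth-order contributions coming from the matrix ${\cal C}$ and from differentiating the variable coefficients cancel precisely when the basic state obeys the nonlinear induction equation \eqref{41} and the constraint $\div\hat{\sf h}=0$ from \eqref{44}. What survives is exactly the transport operator $\partial_t+(\hat w,\nabla)+\div\hat w$ acting on $\div\dot{\sf h}$, with source $\div f_h={\cal F}$; that is, $\div\dot{\sf h}$ solves \eqref{57}. Because $\div\dot{\sf h}$ vanishes for $t<0$ by \eqref{51} and \eqref{57} has a unique past-vanishing solution, $\div\dot{\sf h}$ coincides with the function $f_{15}$ defined by \eqref{57}, which gives \eqref{55}.

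For the boundary identity I would form the normal component of the linearized induction system and restrict it to $x^1=0$. Here the first boundary condition in \eqref{50} is used to eliminate $\partial_t\varphi$ and the tangential terms $\hat v_2\partial_2\varphi+\hat v_3\partial_3\varphi$ in favour of $\dot v_N$ and $g_1$, while the basic-state boundary relations $\widehat H_N|_{x^1=0}=0$ from \eqref{43} and $\div\hat{\sf h}=0$ are used to rewrite the remaining terms. After collecting the normal-derivative contribution $\varphi\,\partial_1\widehat H_N$, the boundary trace of the left-hand side of \eqref{56} is seen to satisfy the two-dimensional transport equation \eqref{58} with source ${\cal G}=\{f_N+\partial_2(\widehat H_2 g_1)+\partial_3(\widehat H_3 g_1)\}|_{x^1=0}$. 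This quantity again vanishes in the past, so by uniqueness for \eqref{58} it equals $g_5$, establishing \eqref{56}.

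The main obstacle is the bookkeeping in these two reductions rather than any conceptual difficulty: one must verify that every genuinely first-order term produced by commuting $\div$ (or by the boundary restriction) past the variable-coefficient operator either cancels or recombines into the stated transport operators, so that the only surviving inhomogeneities are ${\cal F}$ and ${\cal G}$. This cancellation is exactly what forces the hypotheses that the basic state satisfy \eqref{41}, \eqref{43} and \eqref{44}. The algebra is essentially the same as in the current-vortex-sheet case, and I would follow the corresponding derivation in Appendix A of \cite{T09}, adapting it to the present straightening of $\Sigma$ performed without the cut-off function.
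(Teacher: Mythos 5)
Your proposal is correct and takes essentially the same approach as the paper, whose own ``proof'' of Proposition \ref{p3} is precisely the reference you invoke (Appendix A of \cite{T09}, adapted to the straightening without the cut-off $\chi$): derive the transport equations \eqref{57}, \eqref{58} for $\div\dot{\sf h}$ and for the left-hand side of \eqref{56}, with the cancellations enforced by \eqref{41}, \eqref{43}, \eqref{44}, and then identify these quantities with $f_{15}$ and $g_5$ by uniqueness of past-vanishing solutions. The only step worth stating explicitly is that uniqueness for \eqref{57} in the half-space rests on $\hat{w}_1|_{x^1=0}=\hat{v}_N-\varkappa=0$ (first condition in \eqref{40}), so that the interior transport operator $\partial_t+(\hat{w},\nabla)$ is tangent to the boundary and needs no boundary conditions --- the same observation the paper records right after Proposition \ref{p4}.
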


For deriving estimate \eqref{54} we will also need a ``linear'' version of Proposition \ref{p2} for the ``vacuum'' part of problem \eqref{48}--\eqref{51}.

\begin{proposition}
Let the basic state \eqref{37} satisfies assumptions \eqref{38}--\eqref{44}.
Then solutions of problem \eqref{48}--\eqref{51} satisfy
\begin{equation}
{\rm div}^-\,\dot{\mathfrak{h}}=f_{16},\quad {\rm div}^-\,\dot{\mathfrak{e}}=f_{17}\quad\mbox{in}\ \Omega_T,
\label{59}
\end{equation}
\begin{equation}
\dot{\mathcal{H}}_{N}-\widehat{\mathcal{H}}_2\partial_2\varphi -\widehat{\mathcal{H}}_3\partial_3\varphi +
\varphi\,\partial_1\widehat{\mathcal{H}}_{N}=g_6\quad\mbox{on}\ \partial\Omega_T.
\label{60}
\end{equation}
Here $\dot{\mathfrak{h}}=(\dot{\mathcal{H}}_{N},\dot{\mathcal{H}}_2,\dot{\mathcal{H}}_3)$, $\dot{\mathcal{H}}_{N}=\dot{\mathcal{H}}_1-\dot{\mathcal{H}}_2\partial_2\hat{\varphi}-\dot{\mathcal{H}}_3\partial_3\hat{\varphi}$, $\dot{\mathfrak{e}}=(\dot{E}_{N},\dot{E}_2,\dot{E}_3)$, $\dot{E}_{N}=\dot{E}_1-\dot{E}_2\partial_2\hat{\varphi}-\dot{E}_3\partial_3\hat{\varphi}$, the new source terms $f_{16}=
f_{16}(t,x )$, $f_{17}=f_{17}(t,x )$ and $g_6= g_6(t,x')$, which vanish in the past, are determined by the source terms $f_9,\ldots ,f_{14}$, $g_3$, $g_4$ and the basic state as solutions to the linear inhomogeneous equations
\begin{equation}
\partial_t f_{16+k}+ \varkappa\,\partial_1f_{16+k}={\cal F}_{1+k}\quad (k=0,1)\qquad\mbox{in}\ \Omega_T
\label{62}
\end{equation}
\begin{equation}
\partial_t g_6={\cal G}_{1}\quad\mbox{on}\ \partial\Omega_T,
\label{63}
\end{equation}
where
\[
\mathcal{F}_1={\rm div}^-\, {f}_{\mathcal{H}},\quad {f}_{\mathcal{H}}=(f_N^1,f_{10},f_{11}),\quad
f_N^1=f_9-f_{10}\partial_2\hat{\varphi}- f_{11}\partial_3\hat{\varphi},
\]
\[
\mathcal{F}_2={\rm div}^-\, {f}_{E},\quad {f}_{E}=(f_N^2,f_{13},f_{14}),
\quad
f_N^2=f_{12}-f_{13}\partial_2\hat{\varphi}- f_{14}\partial_3\hat{\varphi},
\]
\[
\mathcal{G}_1=(f^1_{N}-\partial_2g_4+
\partial_3g_3+f_{16})|_{x^1=0}.
\]
\label{p4}
\end{proposition}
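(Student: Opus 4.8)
The plan is to establish this proposition by mirroring, at the linearized level and in the good unknowns \eqref{45}, the argument used for its nonlinear counterpart, Proposition \ref{p2} (the plasma analogue being Proposition \ref{p3}). First I would rewrite the effective linearized vacuum equation \eqref{49}, $M(\hat\varphi)\dot V=f_{\rm II}$, in the split form for $\dot{\mathfrak h}=(\dot{\mathcal H}_N,\dot{\mathcal H}_2,\dot{\mathcal H}_3)$ and $\dot{\mathfrak e}=(\dot E_N,\dot E_2,\dot E_3)$ that is the exact analogue of \eqref{30}--\eqref{31}, namely $\partial_t\dot{\mathfrak h}+\varkappa\,\partial_1\dot{\mathfrak h}+\nabla^-\times\dot{\mathfrak E}+B(\hat\varphi)\dot{\mathfrak h}=f_{\mathcal H}$ and $\partial_t\dot{\mathfrak e}+\varkappa\,\partial_1\dot{\mathfrak e}-\nabla^-\times\dot{\mathfrak H}+B(\hat\varphi)\dot{\mathfrak e}=f_E$, where $f_{\mathcal H}$ and $f_E$ are the reorganizations of the components $f_9,\dots,f_{14}$ stated in the proposition. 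Since $M(\hat\varphi)$ carries the same differential structure as $M(\varphi)$, this is a direct rewriting with $\varphi$ replaced by $\hat\varphi$, and the good unknowns are chosen precisely so that no zeroth-order terms in $\varphi$ survive.

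To obtain \eqref{59} I would apply ${\rm div}^-$ to these two equations. Here I use that ${\rm div}^-(\nabla^-\times\,\cdot\,)=0$, and that, because $\hat\varphi$ is independent of $x^1$, the contribution of $B(\hat\varphi)\dot{\mathfrak h}$ exactly cancels the extra terms produced by ${\rm div}^-(\varkappa\,\partial_1\dot{\mathfrak h})$ --- the same cancellation that renders the transport equation in Proposition \ref{p2} free of lower-order terms. Consequently $F_1:={\rm div}^-\dot{\mathfrak h}$ and $F_2:={\rm div}^-\dot{\mathfrak e}$ satisfy the scalar transport equations $\partial_tF_k+\varkappa\,\partial_1F_k={\cal F}_{k}$ with ${\cal F}_1={\rm div}^- f_{\mathcal H}$ and ${\cal F}_2={\rm div}^- f_E$. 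I then \emph{define} $f_{16},f_{17}$ to be the solutions of the very same equations \eqref{62} that vanish in the past. Under assumption \eqref{expan.lin}, i.e. $\varkappa\le-\delta<0$, the boundary $x^1=0$ is outflow for $\partial_t+\varkappa\,\partial_1$, so this transport problem needs no boundary condition; uniqueness for data vanishing in the past (guaranteed by \eqref{51}) then forces $F_1=f_{16}$ and $F_2=f_{17}$, which is \eqref{59}.

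For the boundary relation \eqref{60} I would take the first component of the $\dot{\mathfrak h}$-equation, restrict it to $x^1=0$, and eliminate $\dot E_{\tau_2}$, $\dot E_{\tau_3}$ using the last two boundary conditions in \eqref{50}. The second-order tangential terms in $\widehat E_1\varphi$ cancel by equality of mixed partials; the $\varkappa$-weighted terms combine with the $\dot{\mathcal H}_j\,\partial_t\partial_j\hat\varphi$ terms (recalling $\partial_t\partial_j\hat\varphi=\partial_j\varkappa$) and, after inserting \eqref{59}, cancel the normal derivative $\varkappa\,\partial_1\dot{\mathcal H}_N$. Invoking the basic-state constraint ${\rm div}^-\hat{\mathfrak h}=0$ from \eqref{44} to rewrite $\partial_2(\widehat{\mathcal H}_2\varphi)+\partial_3(\widehat{\mathcal H}_3\varphi)$ in terms of $\widehat{\mathcal H}_2\partial_2\varphi+\widehat{\mathcal H}_3\partial_3\varphi$ and $\varphi\,\partial_1\widehat{\mathcal H}_N$, the whole identity collapses to a pure time-evolution equation at $x^1=0$ for the left-hand side $\Phi$ of \eqref{60}, of the form $\partial_t\Phi={\cal G}_1$ with ${\cal G}_1$ as in \eqref{63}. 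Since $\Phi$ and $g_6$ both vanish in the past and solve $\partial_t(\cdot)={\cal G}_1$, integration in $t$ yields $\Phi=g_6$, i.e. \eqref{60}.

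The conceptual skeleton above is identical to that of Proposition \ref{p2}, so I expect the genuine difficulty to be purely organizational: carefully tracking the numerous source and commutator contributions (those involving $\partial_j\varkappa$ and the reorganizations $f_N^1,f_N^2$) and verifying that they assemble \emph{exactly} into the stated right-hand sides ${\cal F}_1,{\cal F}_2,{\cal G}_1$, while keeping straight the orientation conventions of ${\rm div}^-$ and $\nabla^-\times$ and the sign conventions coming from the vacuum straightening $\tilde x^1=-x^1+\varphi$ and the good unknown $\dot V=V+\varphi\,\partial_1\widehat V$. I would double-check these signs against \eqref{56}--\eqref{58} of the plasma analogue, since it is only in this bookkeeping that an error could slip in.
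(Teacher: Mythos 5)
Your proposal is correct and follows essentially the same route as the paper's own proof: the paper likewise rewrites the linearized vacuum system in the split form \eqref{64}--\eqref{65} (the exact analogue of \eqref{30}--\eqref{31}), applies ${\rm div}^-$ to obtain the transport equations \eqref{62} for $f_{16}={\rm div}^-\dot{\mathfrak{h}}$ and $f_{17}={\rm div}^-\dot{\mathfrak{e}}$, and then restricts the first component of \eqref{64} to $x^1=0$, eliminates $\dot{E}_{\tau_2},\dot{E}_{\tau_3}$ via the last two boundary conditions in \eqref{50}, uses the first equation in \eqref{59}, and finally invokes the second condition in \eqref{44} to identify $g_6=\dot{\mathcal{H}}_N-\partial_2(\widehat{\mathcal{H}}_2\varphi)-\partial_3(\widehat{\mathcal{H}}_3\varphi)$ with the left-hand side of \eqref{60}. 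The only difference is presentational: you make explicit the outflow/uniqueness argument for the transport operator $\partial_t+\varkappa\,\partial_1$ under \eqref{expan.lin} (which the paper leaves implicit by defining $f_{16},f_{17}$ directly as the divergences), and your closing caution about the sign bookkeeping in the ${\rm div}^-$ conventions is well placed, since that is precisely where the identification of \eqref{66} with \eqref{60} is delicate.
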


\begin{proof}
Introducing
\[
\dot{\mathfrak{H}}=(\dot{\mathcal{H}}_1,\dot{\mathcal{H}}_{\tau_2},\dot{\mathcal{H}}_{\tau_3})\quad \mbox{and}\quad \dot{\mathfrak{E}}=(\dot{E}_1,\dot{E}_{\tau_2},\dot{E}_{\tau_3})
\]
with $\dot{\mathcal{H}}_{\tau_i}=\dot{\mathcal{H}}_1\partial_i\hat{\varphi} +\dot{\mathcal{H}}_i$,
we rewrite system \eqref{50} as (cf. \eqref{30}, \eqref{31})
\begin{align}
& \partial_t\dot{\mathfrak{h}}+\nabla^-\times \dot{\mathfrak{E}}+\varkappa\,\partial_1\dot{\mathfrak{h}} + B(\hat{\varphi} )\dot{\mathfrak{h}}={f}_{\mathcal{H}},\label{64}  \\
& \partial_t\dot{\mathfrak{e}}-\nabla^-\times \dot{\mathfrak{H}}+\varkappa\,\partial_1\dot{\mathfrak{e}}+ B(\hat{\varphi} )\mathfrak{e}={f}_{E} . \label{65}
\end{align}
Applying the operator ${\rm div}^-$ to \eqref{64} and \eqref{65}, we obtain equations \eqref{62}
for $f_{16}={\rm div}^-\,\dot{\mathfrak{h}}$ and $f_{17}={\rm div}^-\,\dot{\mathfrak{e}}$. Considering the first equation in \eqref{64} at $x^1=0$ and using the last two boundary conditions in \eqref{51} and the first equation in \eqref{59}, we get \eqref{63} for
\begin{equation}
g_6=\dot{\mathcal{H}}_{N}-\partial_2(\widehat{\mathcal{H}}_2\varphi ) -\partial_3(\widehat{\mathcal{H}}_3\varphi ).
\label{66}
\end{equation}
Taking into account the second condition in \eqref{44} on $\partial\Omega_T$, we obtain that the right-hand side of \eqref{66} coincides with the left-hand side of equation \eqref{60}.
\end{proof}

It follows from the first condition in \eqref{40} and assumption \eqref{expan.lin} that the interior equations \eqref{58} and \eqref{62} do not need boundary conditions. Therefore, using standard arguments of the energy method, from \eqref{58} and \eqref{62} we deduce the estimates
\begin{equation}
\|f_{15}\|_{L_2(\Omega_T)}\leq C\|f_{\rm I}\|_{H^1(\Omega_T)}\leq C[f_{\rm I}]_{2,*,T},\label{67}
\end{equation}
\begin{equation}
\|(f_{16},f_{17})\|_{H^2(\Omega_T)}+\|(f_{16},f_{17})|_{x^1=0}\|_{H^2(\partial\Omega_T)}\leq C\|f_{\rm II}\|_{H^3(\Omega_T)},\label{68}
\end{equation}
Here an later on $C$ is a constant that can change from line to line, and sometimes we show the dependence of $C$ from another constants. In particular, in \eqref{67} the constant $C$ depends on $K$ and $T$, and in \eqref{68} it depends on $K$, $T$, and the constant $\delta$ from \eqref{expan.lin}. Using the trace theorem \cite{OSY} for the spaces $H^m_*$ and the usual  trace theorem for  $H^m$, from \eqref{59}, \eqref{63} and \eqref{68} we get
\begin{equation}
\|g_5\|_{H^1(\partial\Omega_T)}\leq C\left\{\|g_1\|_{H^2(\partial\Omega_T)}+\|f_{{\rm I}|x^1=0}\|_{H^1(\partial\Omega_T)}\right\}
\leq C\{\|g_1\|_{H^2(\partial\Omega_T)}+ [f_{\rm I}]_{2,*,T}\}.\label{69}
\end{equation}
\begin{multline}
\|g_6\|_{H^2(\partial\Omega_T)}\leq C\left\{\|(g_3,g_4)\|_{H^3(\partial\Omega_T)}+
\|(f_{\rm II},f_{16})|_{x^1=0}\|_{H^2(\partial\Omega_T)}\right\}\\
\leq C\{\|(g_3, g_4)\|_{H^3(\partial\Omega_T)}+ \|f_{\rm II}\|_{H^3(\Omega_T)}.\label{70}
\end{multline}

Below we reduce problem \eqref{48}--\eqref{51} to that with $(f_{\rm II},f_{15},f_{16},f_{17})=0$ and $(g,g_5,g_6)=0$, and in Sections \ref{sec:6} and \ref{sec:8} we will work with a reduced problem. If, however, we considered the original problem \eqref{48}--\eqref{51}, taking into account \eqref{67}--\eqref{70}, in the basic a priori estimate we should lose one derivative from $f_{\rm I}$ (in terms of $H^m_*$ norms), one derivative from $g_{\rm I}=(g_1,g_2)$ and two derivatives from the source terms $f_{\rm II}$ and $g_{\rm II}=(g_3, g_4)$ to the solution.\footnote{The fact itself that we lose derivatives from the source terms to the solution is quite natural because the constant coefficients version of problem \eqref{48}--\eqref{51} can satisfy the Kreiss-Lopatinski condition  only in a weak sense. We prove this in Section \ref{sec:7} at least for particular cases.} That is, the a priori estimate \eqref{55} is a roughened version of the basic a priori estimate. We prefer to sacrifice the lesser number of lost derivatives for obtaining such a reduced problem with homogeneous boundary conditions and $f_{\rm II}=0$ that for this problem we can derive an a priori estimate with {\it no loss of derivatives} from a new source  term $f_{\rm I}$. In the future this will enable us to prove the existence of solutions of the reduced linearized problem by an almost classical argument (see \cite{LP}).

We first reduce  problem \eqref{48}--\eqref{51} to that with $f_{\rm II}=0$ and $g_{\rm II}=0$.
To this end we consider the auxiliary unknown $\widetilde{V}$ satisfying system \eqref{49}, the last two boundary conditions in \eqref{50} with $\varphi =0$ and zero initial data:
\begin{equation}
\partial_t\widetilde{V}-\widehat{B}_1\partial_1\widetilde{V}+{B}_2\partial_2\widetilde{V}+
{B}_3\partial_3\widetilde{V}=f_{\rm II} \qquad \mbox{in}\ \Omega_T,\label{71}
\end{equation}
\begin{equation}
\widetilde{E}_{\tau_2}-\varkappa\widetilde{\mathcal{H}}_3 =g_3,\quad
\widetilde{E}_{\tau_3}+\varkappa\widetilde{\mathcal{H}}_2=g_4 \qquad \mbox{on}\ \partial\Omega_T,
\label{72}
\end{equation}
\begin{equation}
\widetilde{V}=0\qquad \mbox{for}\ t<0,\label{73}
\end{equation}
where
\[
\widehat{B}_1=:\widetilde{B}_1(\hat{\varphi})=\left(
\begin{array}{cccccc}
-\varkappa & 0 & 0& 0 & \partial_3\hat{\varphi} & -\partial_2\hat{\varphi} \\
0 & -\varkappa & 0& -\partial_3\hat{\varphi} & 0 & -1 \\
0 & 0 & -\varkappa& \partial_2\hat{\varphi} & 1 & 0 \\
0 & -\partial_3\hat{\varphi} & \partial_2\hat{\varphi}& -\varkappa & 0 & 0 \\
\partial_3\hat{\varphi} & 0 & 1& 0 & -\varkappa & 0 \\
-\partial_2\hat{\varphi} & -1 & 0& 0 & 0 & -\varkappa
\end{array}
\right)
\]
and
\begin{equation}
(\widehat{B}_1\widetilde{V},\widetilde{V})=\widetilde{\mathcal{H}}_3\widetilde{E}_{\tau_2}-\widetilde{\mathcal{H}}_2\widetilde{E}_{\tau_3}+\widetilde{\mathcal{H}}_1\bigl(\widetilde{E}_{\tau_2}\partial_3\hat{\varphi}-\widetilde{E}_{\tau_3}\partial_2\hat{\varphi}\bigr)-\varkappa |\widetilde{V}|^2.
\label{quadr}
\end{equation}
Clearly,  $\tilde{\mathfrak{h}}$ and $\tilde{\mathfrak{e}}$ satisfy equations
\eqref{59}:
\begin{equation}
{\rm div}^-\,\tilde{\mathfrak{h}}=f_{16},\quad {\rm div}^-\,\tilde{\mathfrak{e}}=f_{17}\quad\mbox{in}\ \Omega_T.\label{74}
\end{equation}

\begin{proposition}
Problem \eqref{71}--\eqref{73} with sufficiently smooth coefficients $\varkappa$, $\partial_2\hat{\varphi}$ and $\partial_3\hat{\varphi}$ has a unique solution $\widetilde{V}\in H^s(\Omega_T)$, with $s\geq 0$, for all $(f_{\rm II},g_{\rm II})\in H^{s+1/2}(\Omega_T)\times H^{s+1/2}(\partial\Omega_T)$. This solution obeys the estimate
\begin{equation}
\|\widetilde{V}\|_{H^s(\Omega_T)}\leq C\left\{\|f_{\rm II}\|_{H^{s+1/2}(\Omega_T)} +\|g_{\rm II}\|_{H^{s+1/2}(\partial\Omega_T)}\right\},
\label{est.}
\end{equation}
where $C >0$ is a constants independent on the data $(f_{\rm II},g_{\rm II})$ and depending on the coefficients\,\footnote{Below we use Proposition \ref{p5} with an index $s$ for which assumption \eqref{38} on the coefficients is enough.} and the time $T$.
\label{p5}
\end{proposition}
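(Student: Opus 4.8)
The plan is to treat \eqref{71}--\eqref{73} as a linear symmetric hyperbolic initial--boundary value problem whose coefficients depend only on $(t,x')$ through $\varkappa,\partial_2\hat\varphi,\partial_3\hat\varphi$ (and not on the normal variable $x^1$), and to obtain well-posedness by first proving an a priori estimate and then passing to existence by duality. I would begin by recording the structural facts. By the eigenvalue computation of Section~\ref{sec:3}, the boundary matrix $-\widehat B_1$ has eigenvalues $\varkappa\pm\sqrt{1+(\partial_2\hat\varphi)^2+(\partial_3\hat\varphi)^2}$ and $\varkappa$; under \eqref{expan.lin} and $|\varkappa|<1$ (cf. \eqref{36}) all of them are nonzero, so the boundary $x^1=0$ is noncharacteristic and $\widehat B_1$ is invertible. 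Exactly two eigenvalues are positive, which matches the two boundary conditions \eqref{72}, so the number of conditions is correct; this invertibility will also let me recover $\partial_1\widetilde V$ algebraically from \eqref{71}.

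The core of the proof is the a priori estimate, and the naive energy method does not close here. Multiplying \eqref{71} by $\widetilde V$ and integrating over $\mathbb R^3_+$ produces the boundary term $\int_{x^1=0}(\widehat B_1\widetilde V,\widetilde V)$, whose integrand is \eqref{quadr}; restricting this quadratic form to the kernel of the homogeneous version of \eqref{72}, a direct computation shows that it is not sign-definite (it can be negative once $|\partial_2\hat\varphi|,|\partial_3\hat\varphi|$ are large compared with $|\varkappa|$), so the boundary is not maximally dissipative with respect to the identity symmetrizer. To repair this I would use the secondary symmetrization of Section~\ref{sec:2}: replace $I$ by a symmetrizer $\mathcal B_0>0$, the bound \eqref{21} preserving hyperbolicity, and exploit the representation \eqref{22}, which folds the divergence constraints \eqref{74} into the energy flux. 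Choosing the parameters $\nu=\nu(t,x')$ suitably on $\partial\Omega_T$ should render the symmetrized boundary form controllable by $|g_{\rm II}|^2$ modulo lower-order terms, giving the basic estimate $\|\widetilde V\|_{L^2(\Omega_T)}+\|\widetilde V|_{x^1=0}\|_{L^2(\partial\Omega_T)}\leq C(\|f_{\rm II}\|_{L^2}+\|g_{\rm II}\|_{L^2})$. Equivalently, one may verify the uniform Kreiss--Lopatinski condition for the Maxwell symbol with boundary conditions \eqref{72} and quote the associated symmetrizer theory.

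To reach the $H^s$ estimate \eqref{est.} I would differentiate the problem in the tangential directions $\partial_t,\partial_2,\partial_3$; the commutators with the principal part are of lower order and are controlled by the coefficient bound \eqref{38}, so the tangential derivatives satisfy the same type of estimate, after which the normal derivatives are recovered from \eqref{71} using $\widehat B_1^{-1}$. The half-derivative appearing in the data norms of \eqref{est.} is the harmless margin built into this scheme (or, if the Lopatinski determinant degenerates, the corresponding weak loss). Existence and uniqueness I would obtain in the standard way: uniqueness is immediate from the forward estimate, while the backward adjoint of \eqref{71}--\eqref{73} is a symmetric hyperbolic problem of the same type satisfying the analogous estimate, so a Lax--Phillips duality argument yields a weak solution whose regularity is then lifted to $H^s(\Omega_T)$ by the tangential-differentiation and normal-recovery scheme above. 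Alternatively the whole conclusion can be packaged by citing the classical theory of linear hyperbolic initial--boundary value problems with noncharacteristic boundary satisfying the uniform Lopatinski condition.

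The step I expect to be the genuine obstacle is the boundary estimate. Because the boundary is not maximally dissipative, the positivity of the symmetrized boundary form cannot be read off directly: it must be arranged through the choice of the secondary-symmetrizer parameters $\nu$ on $\partial\Omega_T$ (or through the Kreiss-symmetrizer construction), crucially using the divergence constraints \eqref{74}. Once that boundary form is under control, the tangential regularity, the algebraic recovery of $\partial_1\widetilde V$, and the duality existence argument are all routine for a noncharacteristic symmetric hyperbolic system.
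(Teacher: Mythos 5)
Your diagnosis of the obstacle is correct and matches the paper: the identity symmetrizer is not dissipative on the kernel of \eqref{72}, and the repair is the secondary symmetrization of Section~\ref{sec:2} (the paper's concrete choice is $\nu_1=\varkappa$, $\nu_2=\nu_3=0$). But your order of operations contains a genuine gap. You propose to apply the secondary symmetrization directly to the inhomogeneous problem \eqref{71}--\eqref{73}, whereas the symmetrized system differs from the original one exactly by the terms $R_1\,{\rm div}^-\,\tilde{\mathfrak{h}}+R_2\,{\rm div}^-\,\tilde{\mathfrak{e}}$, and for $f_{\rm II}\neq 0$ these divergences do \emph{not} vanish: by \eqref{74} they equal $f_{16},f_{17}$, which solve the transport equations \eqref{62} with sources built from first derivatives of $f_{\rm II}$. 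So the symmetrized and original systems are no longer equivalent, and absorbing the extra terms $R_1f_{16}+R_2f_{17}$ into the energy estimate costs a full derivative of $f_{\rm II}$ (cf.\ \eqref{68}), worse than the half derivative allowed in \eqref{est.}. The paper circumvents precisely this by a two-step splitting you are missing: first subtract an auxiliary $\check V$ solving \eqref{71} with the \emph{different}, strictly dissipative boundary conditions \eqref{77} ($\check E_{\tau_2}=\check E_{\tau_3}=0$, whence by \eqref{quadr} and \eqref{expan.lin} one has $(\widehat B_1\check V,\check V)|_{x^1=0}=-\varkappa|\check V|^2\geq\delta|\check V|^2$), which absorbs $f_{\rm II}$ with no loss and with trace control \eqref{78}; the remainder $V^{\sharp}=\widetilde V-\check V$ then satisfies the \emph{homogeneous} constraints \eqref{80}, and only at that stage is the secondary symmetrization \eqref{81} legitimately applied as an equivalent system.

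The second gap is your claimed basic estimate controlling both $\|\widetilde V\|_{L_2(\Omega_T)}$ and the trace $\|\widetilde V|_{x^1=0}\|_{L_2(\partial\Omega_T)}$ by $\|f_{\rm II}\|_{L_2}+\|g_{\rm II}\|_{L_2}$. With the correct choice of $\nu$ the boundary form vanishes identically on the homogeneous boundary conditions, $(\widehat{\mathcal B}_1V^{\sharp},V^{\sharp})|_{x^1=0}=0$: the conditions are dissipative but \emph{not} strictly dissipative. Consequently there is no trace control, inhomogeneous boundary data cannot be absorbed by a Young-inequality argument at the boundary, and your sketch supplies no choice of $\nu$ subject to \eqref{21} that would do better. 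The paper instead handles $\check g_3,\check g_4$ by subtracting a more regular function satisfying the boundary conditions (the classical Rauch--Massey lifting), and this is exactly where the $1/2$ derivative in \eqref{est.} is spent (estimate \eqref{82} with $m+1/2$); calling that half derivative a ``harmless margin built into the scheme'' mistakes a structural loss for a cosmetic one. Your fallback --- verify the uniform Kreiss--Lopatinski condition and quote the associated symmetrizer theory --- is unsubstantiated: the identically vanishing boundary form signals a neutrally stable boundary, and uniform stability is neither checked nor plausible here (if it held, the loss in \eqref{est.} could be removed altogether). The remaining ingredients of your plan (tangential differentiation with commutator control via \eqref{38}, recovery of $\partial_1\widetilde V$ from the invertibility of $\widehat B_1$, duality for existence, uniqueness from the forward estimate) are sound and close to the paper's, but the two points above are where the proof actually lives.
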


\begin{proof}
We note that the number of boundary conditions in \eqref{72} coincides with the number of incoming characteristics of system \eqref{71}, i.e., problem \eqref{71}--\eqref{73} has the property of maximality \cite{LP}. To prove the existence of solutions to problem \eqref{71}--\eqref{73} we reduce it to a problem with homogeneous and {\it maximally dissipative} boundary conditions. We first pass to a new unknown for which equations \eqref{74} become homogeneous (with $f_{16}=f_{17}=0$). Actually, we even make system \eqref{71} homogeneous. Let the vector-function $\check{V}=(\check{\mathcal{H}},\check{E})$ which vanishes in the past satisfies system \eqref{71} and the boundary conditions
\begin{equation}
\check{E}_{\tau_2}=0,\quad
\check{E}_{\tau_3}=0 \qquad \mbox{on}\ \partial\Omega_T.
\label{77}
\end{equation}
Then, in view of \eqref{quadr}, \eqref{77} and \eqref{expan.lin}, we have
\[
(\widehat{B}_1\check{V},\check{V})|_{x^1=0}=-\varkappa |\check{V}|^2\geq \delta |\check{V}|^2.
\]
That is, the boundary conditions \eqref{77} are strictly (and maximally) dissipative and, hence, there exists a unique solution $\check{V}$ obeying the estimate (see \cite{RM,Sec98})
\begin{equation}
\|\check{V}\|_{H^s(\Omega_T)}+\|\check{V}_{|x^1=0}\|_{H^s(\partial\Omega_T)}\leq C\|f_{\rm II}\|_{H^s(\Omega_T)}.
\label{78}
\end{equation}

The new unknown ${V}^{\sharp}=\widetilde{V} -\check{V}$
satisfies problem \eqref{71}--\eqref{73} with $f_{\rm II}=0$ and $g_3$ and $g_4$ replaced by
\[
\check{g}_3=g_3+\varkappa\check{\mathcal{H}}_3 \quad\mbox{and}\quad
\check{g}_4=g_4-\varkappa\check{\mathcal{H}}_2.
\]
respectively. It follows from \eqref{78} that
\begin{equation}
\|(\check{g}_3,\check{g}_4)\|_{H^s(\Omega_T)}\leq C\left\{\|f_{\rm II}\|_{H^{s}(\Omega_T)}
+\|{g}_{\rm II}\|_{H^s(\partial\Omega_T)}\right\}.
\label{79}
\end{equation}
Moreover, the new unknown ${V}^{\sharp}$ satisfies the homogeneous equations \eqref{74}:
\begin{equation}
{\rm div}^-\,{\mathfrak{h}}^{\sharp}=0,\quad {\rm div}^-\,{\mathfrak{e}}^{\sharp}=0\quad\mbox{in}\ \Omega_T.\label{80}
\end{equation}

Using the secondary symmetrization \eqref{20} of the Maxwell equations described in Section \ref{sec:2}, we now reduce the problem for ${V}^{\sharp}$ to an equivalent problem with dissipative (but not strictly dissipative) boundary conditions. Taking into account \eqref{20}, \eqref{22} and using \eqref{80} and the fact that
\[
R_1{\rm div}^-\,{\mathfrak{h}}^{\sharp}+R_2{\rm div}^-\,{\mathfrak{e}}^{\sharp}= \left(
\mathcal{K}_2\partial_2\hat{\varphi}+\mathcal{K}_3\partial_3\hat{\varphi}-\mathcal{K}_1\right)\partial_1V^{\sharp}+\mathcal{K}_2\partial_2V^{\sharp}+\mathcal{K}_3\partial_3V^{\sharp},
\]
from system \eqref{71} for ${V}^{\sharp}$ (with $f_{\rm II}=0$) we obtain
\begin{equation}
\mathcal{B}_0\partial_t{V}^{\sharp}-\widehat{\mathcal{B}}_1\partial_1{V}^{\sharp}+\mathcal{B}_2\partial_2{V}^{\sharp}+
\mathcal{B}_3\partial_3{V}^{\sharp}=0 \qquad \mbox{in}\ \Omega_T,\label{81}
\end{equation}
where $\widehat{\mathcal{B}}_1:=\widetilde{\mathcal{B}}_1(\hat{\varphi})=\mathcal{B}_1 -\varkappa \mathcal{B}_1-\partial_2\hat{\varphi}\mathcal{B}_2 -\partial_3\hat{\varphi}\mathcal{B}_3.$ We choose
\[
\nu_1 = \varkappa ,\quad  \nu_2=\nu_3 =0.
\]
Then
\[
\widehat{\mathcal{B}}_1
=\left(
\begin{array}{cccccc}
0& -\varkappa\partial_2\hat{\varphi}  & -\varkappa\partial_3\hat{\varphi} & 0 & \partial_3\hat{\varphi} & -\partial_2\hat{\varphi} \\
-\varkappa\partial_2\hat{\varphi} & -2\varkappa & 0& -\partial_3\hat{\varphi} & 0 & -1-\varkappa^2 \\
-\varkappa\partial_3\hat{\varphi} & 0 & -2\varkappa & \partial_2\hat{\varphi} & 1 +\varkappa^2& 0 \\
0 & -\partial_3\hat{\varphi} & \partial_2\hat{\varphi}& 0& -\varkappa\partial_2\hat{\varphi} & -\varkappa\partial_3\hat{\varphi} \\
\partial_3\hat{\varphi} & 0 & 1+\varkappa^2& -\varkappa\partial_2\hat{\varphi} & -2\varkappa & 0 \\
-\partial_2\hat{\varphi} & -1-\varkappa^2 & 0& -\varkappa\partial_3\hat{\varphi} & 0 & -2\varkappa
\end{array}
\right).
\]
Since $|\varkappa |<1$ the hyperbolicity condition \eqref{21} holds, i.e., $\mathcal{B}_0>0$. It is noteworthy that from system \eqref{81} we can derive equations \eqref{80} and using them obtain the original system \eqref{71}. That is, systems \eqref{71} and  \eqref{81} are equivalent and we may use \eqref{81} to prove the existence of a solution ${V}^{\sharp}$.

Assuming for a moment that $\check{g}_3=\check{g}_4=0$ and using the homogeneous boundary conditions for ${V}^{\sharp}$,
\[
{E}^{\sharp}_{\tau_2}=\varkappa{\mathcal{H}}^{\sharp}_3 ,\quad
{E}^{\sharp}_{\tau_3}=-\varkappa{\mathcal{H}}^{\sharp}_2 \qquad \mbox{on}\ \partial\Omega_T,
\]
after straightforward calculations we get
\[
(\widehat{\mathcal{B}}_1{V}^{\sharp},{V}^{\sharp})|_{x^1=0}=0.
\]
This means that the boundary conditions are dissipative (but not strictly dissipative). To prove the existence of a solution ${V}^{\sharp}$ we reduce the problem for it to that with homogeneous boundary conditions (with $\check{g}_3=\check{g}_4=0$). To this end we use the classical argument \cite{RM} suggesting to subtract from the solution a more regular function satisfying the boundary conditions. Thus, referring to \cite{RM}, we can conclude that there exists a unique solution ${V}^{\sharp}$, and this solution obeys the estimate
\begin{equation}
\|V^{\sharp}\|_{H^m(\Omega_T)}\leq C\|(\check{g}_3,\check{g}_4)\|_{H^{m+1/2}(\partial\Omega_T)}.
\label{82}
\end{equation}
Hence, there exists a unique solution $\widetilde{V}\in H^s(\Omega_T)$ to problem \eqref{71}--\eqref{73} and, in view of \eqref{78}, \eqref{79} and \eqref{82}, it obeys estimate \eqref{est.}.
\end{proof}

Having in hand Proposition \ref{p5}, in problem \eqref{48}--\eqref{51} we can pass to the new ``vacuum'' unknown
\begin{equation}
V=\dot{V}-\widetilde{V}.
\label{83}
\end{equation}
Here to simplify the notations we again use the same letter for the new ``vacuum'' unknown as for the ``vacuum'' unknown in the linearized problem before passing to the ``good unknown'' $\dot{V}$ in \eqref{45}. The new unknown $V$ satisfies the homogeneous system \eqref{49} (with $f_{\rm II}=0$) and the homogeneous last two boundary conditions in \eqref{50} (with $g_{\rm II}=0$).

Concerning the ``plasma'' part of \eqref{48}--\eqref{51}, the change of unknown \eqref{83} only changes the source term in the second boundary condition in \eqref{50}:
\[
\dot{q}-(\widehat{\mathcal{H}},{\mathcal{H}})+(\widehat{E},{E})+ [\partial_1\hat{q}]\varphi =
\check{g}_2:=g_2+(\widehat{\mathcal{H}},\widetilde{\mathcal{H}})-(\widehat{E},\widetilde{E})\qquad \mbox{on}\ \partial\Omega_T.
\]
By virtue of \eqref{est.} and the trace theorem in $H^m$, we have the estimate
\begin{multline}
\|\check{g}_2\|_{H^m(\Omega_T)}\leq C\left\{\|{g}_2\|_{H^m(\partial\Omega_T)}+
\|\widetilde{V}_{|x^1=0}\|_{H^{m}(\partial\Omega_T)}\right\} \\
\leq C\left\{\|{g}_2\|_{H^m(\partial\Omega_T)}+
\|\widetilde{V}\|_{H^{m+1/2}(\Omega_T)}\right\} \\
\leq C\left\{\|{g}\|_{H^{m+1}(\partial\Omega_T)}+
\|f_{\rm II}\|_{H^{m+1}(\Omega_T)} \right\}.
\label{84}
\end{multline}

To reduce the ``plasma'' part of the linearized problem to that with homogeneous boundary conditions (with $g_1=\check{g}_2=0$) and the homogeneous equations \eqref{55} and \eqref{56}
(with $f_{15}=g_5=0$) we can exploit the same arguments as in \cite{T09} for current-vortex sheets. Here we even describe a ``shifting'' function in an almost concrete form:
\[
\widetilde{U} = (\tilde{p},\widehat{\Gamma}^3\tilde{v}_1,0,0,\widetilde{H},0),\qquad
\tilde{v}_1=\chi (x^1)g_1,\quad \tilde{p}= \chi (x^1)\check{g}_2 -\frac{1}{2}\,\delta \widetilde{B}^2,
\]
\[
\delta \widetilde{B}^2=\frac{2}{\widehat{\Gamma}}
\left( (\hat{b},\widetilde{H}) +(\hat{v},\widehat{H})(\widehat{H},\tilde{u})-\widehat{B}^2(\hat{v},\tilde{u})\right),\quad
\tilde{u}=\widehat{\Gamma}^3\tilde{v},\quad \tilde{v}=(\tilde{v}_1,0,0),
\]
where the cut-off function $\chi\in C^{\infty}_0(\mathbb{R_+})$ equals to 1 on $[0,1]$ and
$\widetilde{H}$ solves the equation for $\dot{H}$ contained in (48) with $\dot{v}=\tilde{v}=(\tilde{v}_1,0,0)$:
\begin{equation}
\partial_t\widetilde{H}+ (\hat{w} ,\nabla )
\widetilde{H} - (\tilde{\sf h} ,\nabla ) \hat{v} + \widetilde{H}{\rm div}\,\hat{w}
 ={f}_{H} +(\hat{\sf h} ,\nabla ) \tilde{v} - \widehat{H}\partial_1\tilde{v}_1-\tilde{v}_1\partial_1\widehat{H}\qquad \mbox{in}\ \Omega_T,
 \label{85}
\end{equation}
where $\tilde{\sf h}=(\widetilde{H}_1-\widetilde{H}_2\partial_2\hat{\varphi}
-\widetilde{H}_3\partial_3\hat{\varphi},\widetilde{H}_2,\widetilde{H}_3)$ and $f_H=(f_5,f_6,f_7)$. It is very important that $\hat{w}|_{x^1=0}=0$, i.e., the linear equation \eqref{85} does not need boundary conditions and we easily get the estimate
\begin{equation}
[\partial_1\widetilde{H}]_{s,*,T}+[\widetilde{H}]_{s+1,*,T}\leq C\left\{ [f_{\rm I}]_{s+2,*,T}+\|g_1\|_{H^{s+2}(\partial\Omega_T)}\right\}.
\label{86}
\end{equation}

It is clear that $\widetilde{U}$ satisfies the first two boundary conditions in \eqref{50} with $g_2=\check{g}_2$. Then the new unknown $U=\dot{U}-\widetilde{U}$ satisfies the homogeneous boundary conditions \eqref{50} and system \eqref{48} with $f_{\rm I}=F$, where
\[
F =(F_1,\ldots, F_8)=f_{\rm I}-\mathbb{P}'_e(\widehat{U},\widehat{\Psi}^+)\widetilde{U}.
\]
In view of \eqref{85}, $F_H=(F_5,F_6,F_7)=0$. Since the boundary conditions became homogeneous and $F_H=0$, it follows from Proposition \ref{p3} that the new ``plasma'' unknown $U$ satisfies \eqref{55} and \eqref{56} with $f_{15}=g_5=0$. Moreover, taking into account \eqref{84} and \eqref{86}, for the new source term $F$ we have the estimate
\begin{multline}
[F]_{s,*,T}\leq C\Bigl\{[f_{\rm I}]_{s,*,T}+\|(g_1,\check{g}_2)\|_{H^{s+1}(\partial\Omega_T)}\\+ [\partial_1\widetilde{H}]_{s,*,T}+[\widetilde{H}]_{s+1,*,T}\Bigr\}\\
 \leq C\left\{ [f_{\rm I}]_{s+2,*,T}+\|f_{\rm II}\|_{H^{s+2}(\Omega_T)}+\|g\|_{H^{s+2}(\partial\Omega_T)}\right\}.
 \label{87}
\end{multline}
In this paper we need \eqref{87} for $s=1$:
\begin{equation}
[F]_{1,*,T}
 \leq C\left\{ [f_{\rm I}]_{3,*,T}+\|f_{\rm II}\|_{H^{3}(\Omega_T)}+\|g\|_{H^{3}(\partial\Omega_T)}\right\}.
 \label{88}
\end{equation}

We write down the final form of our reduced linearized problem for the new unknown $({U},{V})$ and the interface perturbation $\varphi$:
\begin{align}
& \widehat{A}_0\partial_t{U}+\sum_{j=1}^{3}\widehat{A}_j\partial_j{U}+
\widehat{\cal C}{U}=F \qquad \mbox{in}\ \Omega_T,\label{89}
\\
& \partial_t{V}-\widehat{B}_1\partial_1{V}+{B}_2\partial_2{V}+
{B}_3\partial_3{V}=0 \qquad \mbox{in}\ \Omega_T,\label{90}
\end{align}
\begin{equation}
\left\{ \begin{array}{ll}
\partial_t\varphi={v}_{N}-\hat{v}_2\partial_2\varphi-\hat{v}_3\partial_3\varphi +
\varphi\,\partial_1\hat{v}_{N}, & \\[6pt]
{q}=(\widehat{\mathcal{H}},{\mathcal{H}})-(\widehat{E},{E})-  [ \partial_1\hat{q}] \varphi  , & \\[6pt]
{E}_{\tau_2}=
\partial_t(\widehat{\mathcal{H}}_3\varphi )
-\partial_2(\widehat{E}_1\varphi )+\varkappa{\mathcal{H}}_3, & \\[6pt]
{E}_{\tau_3}=-\partial_t(\widehat{\mathcal{H}}_2\varphi )
-\partial_3(\widehat{E}_1\varphi ) -\varkappa{\mathcal{H}}_2,& \quad \mbox{on}\ \partial\Omega_T,
\end{array}\right.\label{91}
\end{equation}
\begin{equation}
({U},{V},\varphi )=0\qquad \mbox{for}\ t<0,\label{92}
\end{equation}
where $\widehat{A}_{\alpha}:={A}_{\alpha}(\widehat{U})$ $(\alpha =0,2,3)$,
$\widehat{A}_1:=\widetilde{A}_1(\widehat{U},\hat{\varphi})$,
$\widehat{\cal C}:={\cal C}(\widehat{U},\hat{\varphi})$. Recall also that solutions to problem \eqref{89}--\eqref{92} have the following very important property. Since $F_H=0$ and equations \eqref{90} and the boundary conditions \eqref{91} are homogeneous, solutions to problem \eqref{89}--\eqref{92} satisfy
\begin{equation}
{\rm div}\,{\sf h}=0\qquad\mbox{in}\ \Omega_T,
\label{93}
\end{equation}
\begin{equation}
{\rm div}^-\,{\mathfrak{h}}=0,\quad {\rm div}^-\,{\mathfrak{e}}=0\qquad\mbox{in}\ \Omega_T,
\label{94}
\end{equation}
\begin{equation}
{H}_{N}=\widehat{H}_2\partial_2\varphi +\widehat{H}_3\partial_3\varphi -
\varphi\,\partial_1\widehat{H}_{N}\qquad\mbox{on}\ \partial\Omega_T,
\label{95}
\end{equation}
\begin{equation}
{\mathcal{H}}_{N}=\widehat{\mathcal{H}}_2\partial_2\varphi +\widehat{\mathcal{H}}_3\partial_3\varphi -
\varphi\,\partial_1\widehat{\mathcal{H}}_{N}\qquad\mbox{on}\ \partial\Omega_T.
\label{96}
\end{equation}
We recall that
\[
v_{N}= v_1-v_2\partial_2\hat{\varphi}-v_3\partial_3\hat{\varphi},\quad
E_{\tau_j}=E_1\partial_j\hat{\varphi}+E_j,\quad
{\sf h}=({H}_{N},{H}_2,{H}_3),
\]
\[
{H}_{N}={H}_1-{H}_2\partial_2\hat{\varphi}-{H}_3
\partial_3\hat{\varphi},\quad
{\mathcal{H}}_{N}={\mathcal{H}}_1-{\mathcal{H}}_2\partial_2\hat{\varphi}-{\mathcal{H}}_3\partial_3\hat{\varphi},
\]
\[
{\mathfrak{h}}=({\mathcal{H}}_{N},{\mathcal{H}}_2,{\mathcal{H}}_3)
,\quad {\mathfrak{e}}=({E}_{N},{E}_2,{E}_3),\quad {E}_{N}={E}_1-{E}_2\partial_2\hat{\varphi}-{E}_3\partial_3\hat{\varphi},
\]
and below we will also use the notations
\[
{\mathfrak{H}}=({\mathcal{H}}_1,{\mathcal{H}}_{\tau_2},{\mathcal{H}}_{\tau_3}),\quad {\mathcal{H}}_{\tau_i}={\mathcal{H}}_1\partial_i\hat{\varphi} +{\mathcal{H}}_i,\quad  {\mathfrak{E}}=({E}_1,{E}_{\tau_2},{E}_{\tau_3}).
\]

From now on we concentrate on the proof of the following basic a priori estimate for solutions to problem \eqref{89}--\eqref{92}:
\begin{equation}
[{U}]_{1,*,T}+\|{V}\|_{H^{1}(\Omega_T)}+\|\varphi\|_{H^1(\partial\Omega_T)}
\leq C [F]_{1,*,T},
\label{97}
\end{equation}
where $C=C(K,T)>0$ is a constant independent of $F$. Taking into account \eqref{88}, \eqref{est.} and a corresponding estimate for $\widetilde{U}$, from \eqref{97} we obtain the desired a priori estimate \eqref{54} for the original unknown $(\dot{U},\dot{V})$ and the interface perturbation $\varphi$. It is worth noting that estimate \eqref{97} is with no loss of derivatives from the source term $F$ to the solution. This will be useful for the proof of the existence of solutions to problem \eqref{89}--\eqref{92} (this work is postponed to the future).

\section{Energy a priori estimate for the constant coefficients problem}
\label{sec:6}

To clarify main ideas in a clearer form and to avoid unimportant technical difficulties we first prove estimate \eqref{97} for the case of constant coefficients of \eqref{89}--\eqref{92} and dropped zero-order terms. In Section \ref{sec:8} we will extend this result to variable coefficients. That is, we now ``freeze'' coefficients of problem \eqref{89}--\eqref{92} and drop zero-order terms. Moreover, we assume that $\partial_2\hat{\varphi}=\partial_3\hat{\varphi}$. Then we obtain a constant coefficients linear problem which is, in fact, the result of the linearization of the original nonlinear free boundary value problem \eqref{8}, \eqref{13}, \eqref{14}, \eqref{16}--\eqref{18} about its {\it exact} constant solution
\begin{align}
& U=\widehat{U}=(\hat{p},\hat{u},\widehat{H},\widehat{S})={\rm const}\qquad \mbox{for}\ x^1>\varkappa t,\nonumber \\
& V=\widehat{V}=(\widehat{\mathcal{H}},\widehat{E})={\rm const}\qquad \mbox{for}\ x^1<\varkappa t\nonumber
\end{align}
for the planar plasma-vacuum interface $x^1 =\varkappa t$, where $\varkappa$, with $-1<\varkappa < 0$, is a constant interface speed. We know that the components of this exact constant solution satisfy the relations (cf. \eqref{40}, \eqref{43})
\begin{equation}
\hat{v}_1=\hat{u}_1/\widehat{\Gamma}=\varkappa ,\quad \widehat{H}_1=\widehat{\mathcal{H}}_1=0,\quad \widehat{E}_2=\varkappa\widehat{\mathcal{H}}_3,\quad
\widehat{E}_3=-\varkappa\widehat{\mathcal{H}}_2.
\label{98}
\end{equation}

Taking into account \eqref{98}, we have the following constant coefficients problem:
\begin{align}
& \widehat{A}_0\partial_t{U}+\sum_{j=1}^{3}\widehat{A}_j\partial_j{U}=F \qquad \mbox{in}\ \Omega_T,\label{99}
\\
& \partial_t{V}-\widehat{B}_1\partial_1{V}+{B}_2\partial_2{V}+
{B}_3\partial_3{V}=0 \qquad \mbox{in}\ \Omega_T,\label{100}
\end{align}
\begin{equation}
\left\{ \begin{array}{ll}
\partial_t\varphi={v}_1-\hat{v}_2\partial_2\varphi-\hat{v}_3\partial_3\varphi , & \\[6pt]
{q}=\widehat{\mathcal{H}}_2({\mathcal{H}}_2+\varkappa E_3) +\widehat{\mathcal{H}}_3({\mathcal{H}}_3-\varkappa E_2)-\widehat{E}_1{E}_1  , & \\[6pt]
{E}_{2}=\widehat{\mathcal{H}}_3\partial_t \varphi
-\widehat{E}_1\partial_2 \varphi +\varkappa{\mathcal{H}}_3, & \\[6pt]
{E}_{3}=-\widehat{\mathcal{H}}_2\partial_t\varphi
-\widehat{E}_1\partial_3 \varphi  -\varkappa{\mathcal{H}}_2,& \quad \mbox{on}\ \partial\Omega_T,
\end{array}\right.\label{101}
\end{equation}
\begin{equation}
({U},{V},\varphi )=0\qquad \mbox{for}\ t<0,\label{102}
\end{equation}
where $\widehat{A}_{\alpha}:={A}_{\alpha}(\widehat{U})$ ($\alpha =0,2,3)$,
$\widehat{A}_1:={A}_1(\widehat{U})-\varkappa \widehat{A}_0$, $\widehat{B}_1:=B_1-\varkappa I$, $v=\left(u-(\hat{v},u)\hat{v}\right)/\widehat{\Gamma}$, etc. Moreover, solutions to problem \eqref{99}--\eqref{102} satisfy
\begin{equation}
{\rm div}\,H=0,\quad {\rm div}^-\,{\mathcal{H}}=0,\quad {\rm div}^-\,E=0\qquad\mbox{in}\ \Omega_T,
\label{103}
\end{equation}
\begin{equation}
{H}_{1}=\widehat{H}_2\partial_2\varphi +\widehat{H}_3\partial_3\varphi ,\quad
{\mathcal{H}}_{1}=\widehat{\mathcal{H}}_2\partial_2\varphi +\widehat{\mathcal{H}}_3\partial_3\varphi \qquad\mbox{on}\ \partial\Omega_T.
\label{104}
\end{equation}
From the physical point of view, the well-posedness of problem \eqref{99}--\eqref{102} can be interpreted as the stability of a planar relativistic plasma-vacuum interface (or the  macroscopic stability of a corresponding nonplanar interface).

For problem \eqref{99}--\eqref{102} we now derive the a priori estimate \eqref{97} under the {\it sufficient} stability condition that the constant
$\mu = |\hat{\mu}|$, with
\[
\hat{\mu}= \widehat{E}_1+\hat{v}_2\widehat{\mathcal{H}}_3-\hat{v}_3\widehat{\mathcal{H}}_2,
\]
is small enough (cf.  \eqref{53}) and the vectors $(\widehat{H}_2,\widehat{H}_3)$ and $(\widehat{\mathcal{H}}_2,\widehat{\mathcal{H}}_3)$ are nonzero and nonparallel to each other (cf. \eqref{52}), i.e.,
\begin{equation}
\widehat{H}_2\widehat{\mathcal{H}}_3-\widehat{H}_3\widehat{\mathcal{H}}_2\neq 0.\label{52a}
\end{equation}

Using the secondary symmetrization \eqref{20} of the Maxwell equations, as in \eqref{81}, we
equivalently rewrite system \eqref{100} as
\begin{equation}
\mathcal{B}_0\partial_t{V}-\widehat{\mathcal{B}}_1\partial_1{V}+\mathcal{B}_2\partial_2{V}+
\mathcal{B}_3\partial_3{V}=0 \qquad \mbox{in}\ \Omega_T,\label{105}
\end{equation}
where $\widehat{\mathcal{B}}_1=\mathcal{B}_1 -\varkappa \mathcal{B}_0.$ Now our choice of $\nu_i$ is the following:
\begin{equation}
\nu =\hat{v}= (\hat{v}_1 ,\hat{v}_2,\hat{v}_3)
\label{106}
\end{equation}
($\hat{v}_1=\varkappa$, see \eqref{98}). Since $|\hat{v}|<1$ the hyperbolicity condition \eqref{21} holds. By standard arguments of the energy method applied to the symmetric hyperbolic systems \eqref{99} (multiplied by $1/\widehat{\Gamma}$) and \eqref{105} we obtain
\begin{equation}
I(t)+ 2\int_{\partial\Omega_t}\mathcal{Q}\,{\rm d}x'{\rm d}s\leq C
\left( \| F\|^2_{L_2(\Omega_T)} +\int_0^tI(s)\,{\rm d}s\right),
\label{107}
\end{equation}
where
\[
I(t)=\int_{\mathbb{R}^3_+}\Bigl(\,\frac{1}{\widehat{\Gamma}}(\widehat{A}_0U,U)+(\mathcal{B}_0V,V)\Bigr){\rm d}x,
\quad
\mathcal{Q}=\frac{1}{2}(\widehat{\mathcal{B}}_1V,V)|_{x^1=0}-\frac{1}{2\widehat{\Gamma}}(\widehat{A}_1U,U)|_{x^1=0}.
\]

Taking into account \eqref{35}, we write down the quadratic form $\mathcal{Q}$:
\begin{align*}
\mathcal{Q}= &\,\bigl\{-\varkappa(\mathcal{H}_2^2+\mathcal{H}_3^2+ E_2^2+ E_3^2)+\mathcal{H}_1(\hat{v}_2\mathcal{H}_2+\hat{v}_3\mathcal{H}_3)
+E_1(\hat{v}_2E_2+\hat{v}_3E_3)
\\
&\;\;
+\varkappa \mathcal{H}_1(\hat{v}_2E_3-\hat{v}_3E_2)
+(1+\varkappa^2)(\mathcal{H}_3E_2-\mathcal{H}_2E_3) -qv_1\bigr\}\bigr|_{x^1=0}\,.
\end{align*}
Using the boundary conditions \eqref{101} and the second condition in \eqref{104}, after relatively long calculations, which are omitted here, we get
\[
\mathcal{Q}=\hat{\mu}\left.\left\{ E_1\partial_t\varphi +(\mathcal{H}_2+\varkappa E_3)\partial_3\varphi -(\mathcal{H}_3-\varkappa E_2)\partial_2\varphi \right\}\right|_{x^1=0}
\]
or
\[
\mathcal{Q}=\partial_t\left(\hat{\mu}\varphi E_1|_{x^1=0} \right)+\partial_2\left(\hat{\mu}\varphi(\varkappa E_2-\mathcal{H}_3)|_{x^1=0} \right)  + \partial_3\left(\hat{\mu}\varphi (\mathcal{H}_2+\varkappa E_3)|_{x^1=0}\right) +\mathcal{Q}_0,
\]
and in view of the fourth equation in \eqref{101} and the third equation in \eqref{104},
\[
\mathcal{Q}_0=-\hat{\mu}\varphi \left.\left( \partial_tE_1+\varkappa\partial_1E_1+\partial_3\mathcal{H}_2-
\partial_2\mathcal{H}_3 \right)\right|_{x^1=0}=0.
\]

Then, it follows from \eqref{107} that
\begin{equation}
I(t)+ 2\int_{\mathbb{R}^2}\hat{\mu}\varphi E_1|_{x^1=0}{\rm d}x'\leq C
\left( \| F\|^2_{L_2(\Omega_T)} +\int_0^tI(s)\,{\rm d}s\right).
\label{109}
\end{equation}
At first sight inequality \eqref{109} is absolutely useless. Indeed, we cannot apply it for deriving an $L_2$ a priori estimate. On the other hand, if we differentiate systems \eqref{99} and \eqref{105} with respect to $x^2$, $x^3$ and $t$, we easily obtain the following counterparts of \eqref{109} for the first-order tangential derivatives of $U$ and $V$:
\begin{equation}
I_{\alpha}(t)+ 2\int_{\mathbb{R}^2}\hat{\mu}\,\partial_{\alpha}\varphi \, \partial_{\alpha}E_1|_{x^1=0}{\rm d}x'\leq C
\left( [F]^2_{1,*T} +\int_0^tI_{\alpha}(s)\,{\rm d}s\right),
\label{110}
\end{equation}
where $\alpha =0,2,3$,
\[
I_{\alpha}(t)=\int_{\mathbb{R}^3_+}\Bigl(\,\frac{1}{\widehat{\Gamma}}(\widehat{A}_0\partial_{\alpha}U,\partial_{\alpha}U)+(\mathcal{B}_0\partial_{\alpha}V,\partial_{\alpha}V)\Bigr){\rm d}x\quad \mbox{and}\quad \partial_0:=\partial_t.
\]
The terms $\partial_{\alpha}\varphi \, \partial_{\alpha}E_1|_{x^1=0}$ appearing in the boundary integral are actually lower-order terms and below we explain how to treat them by using our important assumption \eqref{52a} and passing to the volume integral.

Thanks to  \eqref{52a}, from \eqref{104} we get
\begin{equation}
\left\{
\begin{array}{l}
{\displaystyle
\partial_2\varphi =\frac{(\widehat{\mathcal{H}}_3{H}_1-\widehat{H}_3{\mathcal{H}}_1)|_{x^1=0}
}{\widehat{H}_2\widehat{\mathcal{H}}_3-\widehat{H}_3\widehat{\mathcal{H}}_2}=
a_1^1{H}_1|_{x^1=0}+a_2^1{\mathcal{H}}_1|_{x^1=0},}\\[12pt]
{\displaystyle
\partial_3\varphi=-\frac{(\widehat{\mathcal{H}}_2{H}_1-\widehat{H}_2{\mathcal{H}}_1)|_{x^1=0}}{\widehat{H}_2\widehat{\mathcal{H}}_3-\widehat{H}_3\widehat{\mathcal{H}}_2}=
a_1^2{H}_1|_{x^1=0}+a_2^2{\mathcal{H}}_1|_{x^1=0}.}
\end{array}\right.
\label{111}
\end{equation}
First we consider spatial tangential derivatives in \eqref{110}, i.e., let  $\alpha =k$ with $k=2$ or $k=3$.  By using \eqref{111} one gets
\begin{equation}
\partial_k\varphi \, \partial_kE_1|_{x^1=0}=\left. \left(a_1^k{H}_1\partial_kE_1+a_2^k{\mathcal{H}}_1\partial_kE_1\right)\right|_{x^1=0},\quad
k=2,3.
\label{112}
\end{equation}

Concerning the term $\partial_t\varphi \, \partial_tE_1|_{x^1=0}$, it can be reduced to the multiplication of spatial tangential derivatives of $U$ and $V$. Indeed, \eqref{111} and the first boundary condition in \eqref{101} imply
\begin{equation}
\partial_t\varphi =a_1^0{H}_1|_{x^1=0}+a_2^0{\mathcal{H}}_1|_{x^1=0} +v_1|_{x^1=0},
\label{113}
\end{equation}
where the constants  $a_j^0$ can be easily written down. It follows from the fourth equation in \eqref{101} and the third equation in \eqref{104} that
\begin{equation}
\partial_tE_1 = \partial_2(\mathcal{H}_3 +\varkappa E_2)-\partial_3(\mathcal{H}_2-\varkappa E_3).
\label{114}
\end{equation}
Then, \eqref{112}--\eqref{114} yield
\begin{equation}
(\partial_t\varphi \, \partial_tE_1+\partial_2\varphi \, \partial_2E_1+\partial_3\varphi \, \partial_3E_1)|_{x^1=0}=
\sum_{k=2}^{3}(M_kY,\partial_kV)|_{x^1=0},
\label{115}
\end{equation}
where $Y =(v_1,H_1,\mathcal{H}_1)$ and $M_k$ are rectangular matrices which constant coefficients depending on $(\widehat{U},\widehat{V})$ are of no interest.

To treat the integrals of the lower-order terms like $H_1\partial_kE_1|_{x^1=0}$ contained in the right-hand side of \eqref{115} we use the same standard arguments as in \cite{T05,T09,T09.cpam}. That is, we pass to the volume integral and integrate by parts:
\[
\int_{\mathbb{R}^2}H_1\partial_kE_1|_{x^1=0}\,{\rm d}x'=-\int_{\mathbb{R}^3_+}\partial_1(H_1\partial_kE_1)\,{\rm d}x  =
\int_{\mathbb{R}^3_+}\left\{ \partial_kH_1\partial_1E_1-\partial_1H_1\partial_kE_1 \right\}{\rm d}x.
\]
After that the normal ($x^1$-) derivatives of $v_1$, $H_1$ and $V$ can be expressed through tangential derivatives by using the facts that the boundary $x^1=0$ is noncharacteristic for the ``vacuum'' system \eqref{100} and $v_1$ and $H_1$ are noncharacteristic ``plasma'' unknowns.
Namely, since $\det\widehat{B}_1\neq 0$,
\begin{equation}
\partial_1{V}=(\widehat{B}_1)^{-1}(
\partial_t{V}+{B}_2\partial_2{V}+
{B}_3\partial_3{V}),
\label{117}
\end{equation}
and taking into account \eqref{35}, \eqref{35'} and the first equation in \eqref{103}, we have
\begin{align}
\left(\begin{array}{c}
\partial_1v_1 \\[3pt]
(1/\widehat{\Gamma}\,)\,\partial_1q
\end{array}
\right) &= -\frac{1}{\widehat{\Gamma}}\,\left\{J^{\sf T}\left(\widehat{A}_0\partial_t{U}+\widehat{A}_2\partial_2{U}+\widehat{A}_3\partial_3{U}+F\right)\right\}_{(12)},\label{118}
\\
\partial_1H_1 &=-\partial_2H_2-\partial_3H_3,\label{119}
\end{align}
where $\{\cdots\}_{(12)}$ denotes the first two components of the vector inside braces.
Hence, we obtain
\begin{equation}
2\hat{\mu}\sum_{\alpha =0,2,3}\int_{\mathbb{R}^2}\partial_{\alpha}\varphi \, \partial_{\alpha}E_1|_{x^1=0}{\rm d}x'  =\hat{\mu}\int_{\mathbb{R}^3_+}({\rm Q}Z,Z){\rm d}x +
\int_{\mathbb{R}^3_+}({\rm Q}_1F,Z){\rm d}x,
\label{120}
\end{equation}
where ${\rm Q}$ is a quadratic matrix of order 42 which elements can be explicitly written down if necessary,  $Z= (\partial_tU,\partial_tV,\partial_2U,\partial_2V,\partial_3U,\partial_3V)$, and ${\rm Q}_1$ is a rectangular matrix which elements are of no interest.

Now, using  \eqref{120}, we consider the sum of inequalities \eqref{110}:
\begin{equation}
\int_{\mathbb{R}^3_+}\left( (\mathfrak{A}+ \hat{\mu}{\rm Q})Z,Z)\right){\rm d}x
+\int_{\mathbb{R}^3_+}({\rm Q}_1F,Z){\rm d}x
\leq C
\left( [F]^2_{1,*T} +\sum_{\alpha =0,2,3}\int_0^tI_{\alpha}(s)\,{\rm d}s\right),
\label{121}
\end{equation}
where $\mathfrak{A}=\diag ((1/\widehat{\Gamma}\,)\widehat{A}_0,\widehat{\mathcal{B}}_0,\ldots ,(1/\widehat{\Gamma}\,)\widehat{A}_0,\widehat{\mathcal{B}}_0)$ is the block-diagonal positive definite matrix of order 42. If $\mu=|\hat{\mu}|$ is small enough, then the matrix $\mathfrak{A}+ \hat{\mu}{\rm Q}$ is also positive definite (this is enough to derive estimate \eqref{97}, see below). In principle, we can analyze the sufficient stability condition
\begin{equation}
\mathfrak{A}+ \hat{\mu}{\rm Q}>0
\label{122}
\end{equation}
if necessary. Since the matrix $\mathfrak{A}+ \hat{\mu}{\rm Q}$ is of order 42 and in RMHD the matrix $\widehat{A}_0$ is rather complicated, this is almost impossible to be done ana\-lytically, but a corresponding numerical analysis of condition \eqref{122} seems straightforward.

In this paper we restrict ourself to the assumption that $\mu$ is small enough. Using the Young inequality and the elementary inequality
\begin{equation}
\| F(t)\|^2_{L_2(\mathbb{R}^3_+)}\leq C [F]^2_{1,*,t}
\label{123}
\end{equation}
following from the trivial relation
\[
\frac{\rm d}{{\rm d}t}\,\|F(t)\|^2_{L_2(\mathbb{R}^3_+)}=2\int_{\mathbb{R}^3_+}(F,\partial_tF)\,{\rm d}x,
\]
we estimate:
\begin{equation}
\int_{\mathbb{R}^3_+}({\rm Q}_1F,Z){\rm d}x\leq C\left\{ \varepsilon\, \|Z(t)\|^2_{L_2(\mathbb{R}^3_+)} + \frac{1}{\varepsilon}\,\| F\|^2_{1,*,T} \right\},
\label{124}
\end{equation}
where $\varepsilon$ is a small positive constant. Choosing $\varepsilon$ to be small enough
and using \eqref{122} and \eqref{124},  from inequality  \eqref{121} we obtain
\begin{equation}
\|Z(t)\|^2_{L_2(\mathbb{R}^3_+)}
\leq C
\left( [F]^2_{1,*T} +\|Z\|^2_{L_2(\Omega_t)}\right).
\label{125}
\end{equation}
Here we can already use Gronwall's lemma, but we prefer to do not do this and follow arguments which can be then extended to the case of variable coefficients (see Section \ref{sec:8}).

We can easily include the $L_2$ norms of $U$, $V$, $\sigma \partial_1U$ and
$\partial_1V$ in inequality \eqref{125}. Indeed, to estimate the weighted derivatives of $U$ we do not need boundary conditions because the weight $\sigma |_{x^1=0}=0$. By applying to system \eqref{99} the operator $\sigma\partial_1$ and using standard arguments of the energy method, we obtain the inequality
\begin{equation}
\|\sigma\partial_1{U}(t)\|^2_{L_2(\mathbb{R}^3_+)} \leq C\left\{ [F]^2_{1,*,T}+[{U}]^2_{1,*,t} \right\}\label{126}
\end{equation}
(actually for the present case of constant coefficients the norm $[{U}]^2_{1,*,t}$ in \eqref{126} can be even replaced by $\|\sigma\partial_1{U}\|^2_{L_2(\Omega_t)}$). It follows from \eqref{117} that
\begin{equation}
\|\partial_1V(t)\|^2_{L_2(\mathbb{R}^3_+)} \leq C\|V\|^2_{H^1(\Omega_t)}.\label{127}
\end{equation}
Summing up inequalities \eqref{125}--\eqref{127} and the elementary inequalities for $U$ and $V$ like  \eqref{123},
\begin{equation}
\| U(t)\|^2_{L_2(\mathbb{R}^3_+)}\leq C [U]^2_{1,*,t},\qquad
\| V(t)\|^2_{L_2(\mathbb{R}^3_+)}\leq C \|V\|^2_{H^1(\Omega_t)},
\label{128}
\end{equation}
we get
\begin{equation}
\nt U(t)\nt ^2_{1,*}+\nt V(t)\nt ^2_{H^1(\mathbb{R}^3_+)}
\leq C
\left( [F]^2_{1,*T} +[{U}]^2_{1,*,t}+ \|V\|^2_{H^1(\Omega_t)}\right),
\label{129}
\end{equation}
with
\[
\nt V (t)\nt^2_{H^1(\mathbb{R}^3_+)}:=\|V(t)\|^2_{H^1(\mathbb{R}^3_+)}+
\|\partial_tV(t)\|^2_{L_2(\mathbb{R}^3_+)}.
\]

It remains to estimate the interface perturbation $\varphi$. By using \eqref{118} and \eqref{119} we first estimate the noncharacteristic part $U_n=(q,v_1,H_1)$ of the unknown $U$:
\begin{equation}
\|\partial_1U_n (t)\|^2_{L_2(\mathbb{R}^3_+)}\leq C(K)\left\{[F]^2_{1,*,T}+\nt {U}(t)\nt^2_{1,*}\right\}.
\label{130}
\end{equation}
Then thanks to the trace theorems, \eqref{127} and \eqref{130} imply
\begin{equation}
\|U_n|_{x^1=0} (t)\|^2_{L_2(\mathbb{R}^2)} \leq C \left\{[F]^2_{1,*,T}+\nt \dot{U}(t)\nt^2_{1,*}\right\},
\label{131}
\end{equation}
\begin{equation}
\|V|_{x^1=0} (t)\|^2_{L_2(\mathbb{R}^2)}  \leq C\|V\|^2_{H^1(\Omega_t)}.
\label{132}
\end{equation}
The multiplication of the first boundary condition in \eqref{101} by $2\varphi$, the integration over the domain $\mathbb{R}^2$, and the usage of \eqref{111}, \eqref{113}, estimates \eqref{131} and \eqref{132} result in
\begin{equation}
\nt\varphi (t)\nt^2_{H^1(\mathbb{R}^2)}\leq C\Bigl\{ [F]^2_{1,*,T}
+  [{U}]^2_{1,*,t} +\|V\|^2_{H^1(\Omega_t)}+\|\varphi\|^2_{L_2(\partial\Omega_t)}\Bigr\}.
\label{133}
\end{equation}
Summing up inequalities \eqref{129} and \eqref{133} we obtain
\begin{multline}
\nt U(t)\nt ^2_{1,*}+\nt V(t)\nt ^2_{H^1(\mathbb{R}^3_+)}+
\nt\varphi (t)\nt^2_{H^1(\mathbb{R}^2)} \\ \leq C\Bigl\{ [F]^2_{1,*,T}
+  [{U}]^2_{1,*,t} +\|V\|^2_{H^1(\Omega_t)}+\|\varphi\|^2_{H^1(\partial\Omega_t)}\Bigr\}.
\label{134}
\end{multline}
Applying Gronwall's lemma, from \eqref{134} we derive the desired a priori estimate \eqref{97}.

\begin{remark}{\rm
In fact, for problem \eqref{99}--\eqref{102} we have constructed a dissipative 1-symmetrizer \cite{T06}. The index 1 means that we prolong our hyperbolic system up to first-order derivatives and manage to construct a secondary symmetrization of the prolonged system with dissipative boundary conditions (see \cite{T06}). The divergence constrains \eqref{103} play a crucial role in this construction. Note that in \cite{T05} we constructed a dissipative 0-symmetrizer and proved an $L_2$ estimate for the constant coefficients linearized problem for current-vortex sheets and then derived an estimate in $H^1_*$ for the variable coefficients problem. Here, for the plasma-vacuum interface problem we have to prolong the hyperbolic system up to first-order derivatives already on the level of constant coefficients.}
\label{r3}
\end{remark}

\section{Normal modes analysis for particular cases}
\label{sec:7}

There appears the following natural question. Can problem \eqref{99}--\eqref{102} be really ill-posed or just our approach in Section \ref{sec:6} does not allow us to derive an energy a priori estimate for the whole range of admissible parameters of this problem? Below we show that this problem can be indeed ill-posed. In other words, unlike planar fluid-vacuum boundaries \cite{Lind,T09.cpam} and planar nonrelativistic plasma-vacuum interfaces \cite{T10},
a planar relativistic plasma-vacuum interface can be violently unstable, i.e., {\it Kelvin-Helmholz instability} can occur. Mathematically this means that there is a range of admissible parameters of problem \eqref{99}--\eqref{102} for which the Kreiss-Lopatinski condition \cite{Kreiss,Maj83b} is violated.

Unfortunately, due to principal technical difficulties a complete Fourier-Laplace analysis of problem \eqref{99}--\eqref{101} seems impossible (even numerically). Therefore, now our goal is just to find particular examples of the ill-posedness of problem \eqref{99}--\eqref{101}.
We restrict ourself to a case when the structure of the matrices $A_{\alpha}$ in \eqref{8} is not so complicated and we can perform analytical calculations. This is the case
when the 3-vectors of velocity and magnetic field are collinear and have only one nonzero component. Such a case was already considered in \cite{T99} for relativistic MHD shock waves.

The test of the Kreiss-Lopatinski condition is equivalent to the usual normal modes analysis, i.e., the construction of an Hadamard-type ill-posedness example. We seek exponential solutions to problem \eqref{99}--\eqref{101} with $F=0$ and special initial data:
\begin{align}
& U = \bar{U}\exp \left\{ \tau t +\xi_{\rm p}x^1+ i(\gamma ', x') \right\},\label{135}\\
& V = \bar{V}\exp \left\{ \tau t +\xi_{\rm v}x^1+ i(\gamma ', x') \right\},\label{136}\\
& \varphi = \bar{\varphi}\exp \left\{ \tau t + i(\gamma ', x') \right\},\label{137}
\end{align}
where $\bar{U}$ and $\bar{V}$ are complex-valued constant vectors, $\tau$, $\xi_{\rm p}$, $\xi_{\rm v}$ and $\bar{\varphi}$ are complex constants, and $\gamma '=(\gamma_2,\gamma_3)$ with real constants $\gamma_{2,3}$. The existence of exponentially growing solutions in
form \eqref{135}--\eqref{137}, with
\begin{equation}
\Re\,\tau>0,\quad \Re\,\xi_{\rm p}<0,\quad
\Re\,\xi_{\rm v}<0,\label{138}
\end{equation}
implies the ill-posedness of problem \eqref{99}--\eqref{101} because the consequence of solutions
\[
(U (nt,n{x} ), V( nt, nx), \varphi (nt,n{x}' ))\exp
(-\sqrt{n}\,),\quad n=1,2,3,\ldots\,,
\]
with $U$, $V$ and $\varphi$ defined in \eqref{135}--\eqref{137}, is the Hadamard-type ill-posedness example. Since the last equation in \eqref{99} for the entropy perturbation $S$ plays no role in the construction of an ill-posedness example, we will suppose that $U=(p,u,H)$.

Theoretically, we can construct a 1D ill-posedness example (with $\gamma' =0$) on a codimension-1 set in the parameter domain. But, this is not the case for problem \eqref{99}--\eqref{101}. Indeed, in 1D we have $E_1=0$ and the energy inequality \eqref{109}, which can be obtained even for $\varkappa \geq 0$, implies well-posedness.

Thus, we may assume that $\gamma' \neq 0$ and even, without loss of generality, $|\gamma'|=1$.
Moreover, in view of the big complexity of the RMHD system, we will construct a 2D ill-posedness example with $\gamma_3=0$, i.e., $\gamma_2 =1$. Let also consider the particular case $\hat{v}_1=\hat{v}_2=0$ and $\widehat{H}_2=0$ for the relativistic magnetoacoustic system \eqref{109} (recall that $\widehat{H}_1=0$, see \eqref{98}). Since $\hat{v}_1=\varkappa$, we have $\varkappa =0$. However, by continuity the below ill-posedness examples can be extended to the case $|\varkappa | \ll 1$. For the particular case $\hat{v}_1=\hat{v}_2=\widehat{H}_1=\widehat{H}_2=0$ the matrices $\widehat{A}_{\alpha}$  (without the last rows and columns) take the following ``visible'' form:
\[
\widehat{A}_0=\left( \begin{array}{ccccccc}
\widehat{\Gamma}/(\hat{\rho}\hat{a}^2)&0&0&\hat{v}_3&0&0&0\\
0&\hat{\rho}\hat{h}\widehat{\Gamma}+\widehat{H}_3^2/\widehat{\Gamma}&0&0&0&0&0\\
0&0&\hat{\rho}\hat{h}\widehat{\Gamma}+\widehat{H}_3^2/\widehat{\Gamma}&0&0&0&0\\
\hat{v}_3&0&0&\hat{\rho}\hat{h}/\widehat{\Gamma} &0&0&0\\
0&0&0&0&1/\widehat{\Gamma}&0&0\\
0&0&0&0&0&1/\widehat{\Gamma}&0\\
0&0&0&0&0&0&\widehat{\Gamma} \end{array}\right),
\]
\[
\widehat{A}_1=\left( \begin{array}{ccccccc}
0&1&0&0&0&0&0\\
1&0&0&0&0&0&\widehat{H}_3\\
0&0&0&0&0&0&0\\
0&0&0&0 &0&0&0\\
0&0&0&0&0&0&0\\
0&0&0&0&0&0&0\\
0&\widehat{H}_3&0&0&0&0&0 \end{array}\right),\quad
\widehat{A}_2=\left( \begin{array}{ccccccc}
0&0&1&0&0&0&0\\
0&0&0&0&0&0&0\\
1&0&0&0&0&0&\widehat{H}_3\\
0&0&0&0 &0&0&0\\
0&0&0&0&0&0&0\\
0&0&0&0&0&0&0\\
0&0&\widehat{H}_3&0&0&0&0 \end{array}\right)
\]
(see also \cite{T99} for the analogous case $\hat{v}_2=\hat{v}_3=\widehat{H}_2=\widehat{H}_3=0$). Since $\gamma_3=0$, the matrix $\widehat{A}_3$ is of no interest and we do not write down it here.

Substituting \eqref{135} and \eqref{136} into \eqref{99} (with $F=0$) and \eqref{100} respectively, we get the dispersion relations
\[
\det (\tau \widehat{A}_0 +\xi_{\rm p}\widehat{A}_1+i\widehat{A}_2)=0,\quad
\det (\tau I -\xi_{\rm v}B_1+iB_2)=0
\]
from which, omitting technical calculations, we find the unique roots
\begin{equation}
\xi_{\rm p}=-\sqrt{1+r\tau^2}, \qquad \xi_{\rm v}=-\sqrt{1+\tau^2},
\label{139}
\end{equation}
with property \eqref{138}, where
\[
r=\frac{w^2(\widehat{\Gamma}^2+m^2\hat{c}_{s}^2)}{\hat{c}_{s}^2(1+m^2w^2)},\quad
w^2=1-\hat{v}_3^2\hat{c}_{s}^2>0,\quad m=\frac{\widehat{H}_3}{\hat{a}\sqrt{\hat{\rho}}}.
\]

Consider first the particular case $\widehat{\mathcal{H}}_2=0$. For this case our basic assumption \eqref{52a} is violated. Substituting \eqref{135}--\eqref{137} into the boundary conditions \eqref{101}, we obtain
\[
\tau \bar{\varphi}=\bar{v}_1,\quad
\bar{q}=\widehat{\mathcal{H}}_3\bar{\mathcal{H}}_3-\widehat{E}_1\bar{E}_1,\quad
\bar{E}_{2}=\tau\widehat{\mathcal{H}}_3\bar{\varphi}
-i\widehat{E}_1\bar{\varphi} ,
\]
and excluding the constant $\bar{\varphi}$ we come to the relations
\begin{equation}
\bar{q}+\widehat{E}_1\bar{E}_1-\widehat{\mathcal{H}}_3\bar{\mathcal{H}}_3=0,\quad
(i\widehat{E}_1-\tau\widehat{\mathcal{H}}_3)\bar{v}_1+\tau\bar{E}_{2}=0.\label{140}
\end{equation}
From \eqref{135} and \eqref{136} we have
\begin{equation}
\xi_{\rm p}\bar{q}+\tau
\Bigl(\hat{\rho}\hat{h}\widehat{\Gamma}+\frac{\widehat{H}_3^2}{\widehat{\Gamma}}\,\Bigr)\bar{v}_1=0,\quad
\tau \bar{E}_1=i\bar{\mathcal{H}}_3,\quad
i\bar{E}_2=\xi_{\rm v} \bar{E}_1  .\label{141}
\end{equation}
Then, \eqref{140} and \eqref{141} imply
\begin{equation}
\left(
\begin{array}{ccc}
1  & 0 & \widehat{E}_1+i\tau \widehat{\mathcal{H}}_3 \\[3pt]
\xi_{\rm p} & \;\,\tau(\hat{\rho}\hat{h}\widehat{\Gamma}+\widehat{H}_3^2/\widehat{\Gamma})\; & 0 \\[3pt]
0 & i\widehat{E}_1-\tau\widehat{\mathcal{H}}_3 & -i\tau\xi_{\rm v}
\end{array}
\right)\left(
\begin{array}{c} \bar{q} \\ \bar{v}_1 \\ \bar{E}_1\end{array}\right) =0.\label{142}
\end{equation}

Since we are interesting in solutions with $(\bar{U},\bar{V})\neq 0$, the determinant of the matrix in \eqref{142} should be equal to zero. Taking into account \eqref{139}, this gives us the following final equation for $\tau$:
\begin{equation}
\tau^2
\sqrt{1+\tau^2} = \ell ( \widehat{E}_1+i\widehat{\mathcal{H}}_3\tau )^2\sqrt{1+r\tau^2},
\label{143}
\end{equation}
where
\[
\ell =\frac{\widehat{\Gamma}}{\hat{\rho}\hat{h}\widehat{\Gamma}^2+\widehat{H}_3^2} >0,\qquad
r>1.
\]
If $\widehat{E}_1=\widehat{\mathcal{H}}_3=0$, then $\mu =0$ and equation \eqref{143} has only roots with $\Re\,\tau =0$. That is, at least for the 2D problem (recall that $\gamma_3=0$) the Kreiss-Lopatinski is satisfied in a weak sense. Note that for the case $\mu =0$ we can derive an $L_2$ a priori estimate for problem \eqref{99}--\eqref{102} (in 3D) even if assumption \eqref{52a} is violated. This follows from \eqref{110}.

If $\widehat{\mathcal{H}}_3=0$ and  $\widehat{E}_1\neq 0$, then graphically it is easy to see that equation \eqref{143} has a root $\tau^2>0$, i.e., we have an ``unstable'' root $\tau >0$. Clearly, there is an ``unstable'' root $\tau$ with $\Re\,\tau >0$ at least if $|\widehat{\mathcal{H}}_3|$ is small enough. It seems that one can prove that \eqref{143} has a root with property \eqref{138} even for arbitrary values of $\widehat{\mathcal{H}}_3$. We do not study this technical problem here. We only note that for the case $r\rightarrow 1$ which correspond to the ultrarelativistic limit $\hat{c}_s\rightarrow 1$ equation \eqref{143} has a root with $\Re\,\tau >0$ for arbitrary $\widehat{\mathcal{H}}_3$ and $\widehat{E}_1\neq 0$.
Thus, we see that if assumption \eqref{52a} is violated, then the linearized problem can be ill-posed.

We now consider a particular case when assumption \eqref{52a} holds. Let $\widehat{\mathcal{H}}_3=0$ and $\widehat{H}_3\widehat{\mathcal{H}}_2\neq 0$. Substituting \eqref{135}--\eqref{137} into the boundary conditions \eqref{101}, after the exclusion of
constant $\bar{\varphi}$ we obtain
\begin{equation}
\bar{q}+\widehat{E}_1\bar{E}_1-\widehat{\mathcal{H}}_2\bar{\mathcal{H}}_2=0,\quad
i\widehat{E}_1\bar{v}_1+\tau\bar{E}_{2}=0,\quad \widehat{\mathcal{H}}_2\bar{v}_1+\bar{E}_{3}=0.\label{145}
\end{equation}
It follows from \eqref{99} and \eqref{100} that
\begin{equation}
\xi_{\rm p}\bar{q}+\tau
\Bigl(\hat{\rho}\hat{h}\widehat{\Gamma}+\frac{\widehat{H}_3^2}{\widehat{\Gamma}}\,\Bigr)\bar{v}_1=0,\quad
\xi_{\rm v} \bar{E}_3=\tau\bar{\mathcal{H}}_2,\quad
i\bar{E}_2=\xi_{\rm v} \bar{E}_1  .\label{146}
\end{equation}
The algebraic system \eqref{145}, \eqref{146} has a nonzero solution $(\bar{q},\bar{v}_1,\bar{E})$ if
\begin{equation}
\tau^2
\sqrt{1+\tau^2} = \ell \bigl( \widehat{E}_1^2-\widehat{\mathcal{H}}_2^2 (1+\tau^2)\bigr)\sqrt{1+r\tau^2}
\label{147}
\end{equation}
(we omit intermediate simple calculations) or $G(y)=0$, where $y=\tau^2$ and
\[
G(y)=y\sqrt{1+y} - \ell \bigl( \widehat{E}_1^2-\widehat{\mathcal{H}}_2^2 (1+y)\bigr)\sqrt{1+ry}.
\]

If $\widehat{E}_1^2>\widehat{\mathcal{H}}_2^2$, then $G(0)<0$ and $G(y^*)>0$, where $y^*=(\widehat{E}_1^2/\widehat{\mathcal{H}}_2^2)-1 >0$ (recall that we consider the case $\widehat{\mathcal{H}}_2\neq 0$). Hence, if
\begin{equation}
\widehat{E}_1^2>\widehat{\mathcal{H}}_2^2,
\label{148}
\end{equation}
then equation \eqref{147} has an ``unstable'' root $\tau >0$, i.e., problem \eqref{99}--\eqref{102} is ill-posed. If
\begin{equation}
\widehat{E}_1^2\leq\widehat{\mathcal{H}}_2^2,
\label{149}
\end{equation}
at least for $r\rightarrow 1$ it is easy to see that equation \eqref{147} has only roots with $\Re\,\tau =0$. That is, again the Kreiss-Lopatinski can be satisfied only in a weak sense.

For the particular case under consideration, $\mu =|\widehat{E}_1-\hat{v}_3\widehat{\mathcal{H}}_2|$. In Section \ref{sec:6} we could not unfortunately concretize how small should be $\mu$ to guarantee the fulfillment of \eqref{122}. Clearly, if $\widehat{E}_1$ is close to $\hat{v}_3\widehat{\mathcal{H}}_2$, then \eqref{122} holds.
If, for example, $\hat{v}_3=1/2$, we see that the condition $\widehat{E}_1\approx \hat{v}_3\widehat{\mathcal{H}}_2$ is more restrictive than \eqref{149}. In general, if $\widehat{E}_1$ is close to $\hat{v}_3\widehat{\mathcal{H}}_2$, the instability condition \eqref{148} is violated.

\section{Variable coefficients analysis}
\label{sec:8}

The principal difference of the derivation of estimate \eqref{97} for the variable coefficients case in comparison with the constant coefficients one is the appearance of additional ``not dangerous'' lower-order terms and the fact that the usage of spaces $H^m_*$ is a principal point.\footnote{For the constant coefficients problem we could estimate the ``plasma'' unknown $U$ in the norm $\|U\|_{H^1_{\rm tan}}=\|U\|_{L_2}+\|\partial_tU\|_{L_2}+\|\partial_2U\|_{L_2}+\|\partial_3U\|_{L_2}.$}
We now give short comments about the adaptation of arguments from Section \ref{sec:6}
to the variable coefficients problem \eqref{89}--\eqref{92}.

First of all, for problem \eqref{89}--\eqref{92} we can easily get estimates \eqref{126}--\eqref{128}, \eqref{130}--\eqref{132}, where now $U_n=(q,v_N,H_N)$. For example, the usage of the norms of $H^1_*$ in the right-hand side of \eqref{130} is really necessary because the boundary matrix $J^{\sf T}\widehat{A}_1J=\mathcal{E}_{12} +\mathcal{N}$ (see \eqref{35}, \eqref{35'}) and we can guarantee that the matrix $\mathcal{N}$ is zero only on the boundary: $\mathcal{N}|_{x^1}=0$. Concerning the preparatory estimate \eqref{133}, by virtue \eqref{95}, \eqref{96} and assumption \eqref{52}, we can also obtain it because now in \eqref{111}, \eqref{113} we just have additional zero-order terms in $\varphi$ and should replace $H_1$ by $H_N$, $\mathcal{H}_1$ by $\mathcal{H}_N$, and $v_1$ by $v_N$.

Using \eqref{94}, we get system \eqref{81} for $V$ and our choice is again \eqref{106} (but now $\hat{v}_j$ are not constants). Then, we deduce the following energy inequality for $\partial_{\a}U$ and $\partial_{\a}V$ with ${\a}=0,2,3$:
\begin{equation}
I_{\a}(t)+ 2\int_{\partial\Omega_t}\mathcal{Q}_{\a}\,{\rm d}x'{\rm d}s \\ \leq C
\left\{ [F]^2_{1,*,T} +[{U}]^2_{1,*,t} +\|V\|^2_{H^1(\Omega_t)}\right\},
\label{150}
\end{equation}
where $I_{\a}$ were determined just after \eqref{110} and
\begin{multline*}
\mathcal{Q}_{\a}=\frac{1}{2}(\widehat{\mathcal{B}}_1\partial_{\a}V,\partial_{\a}V)|_{x^1=0}-\frac{1}{2\widehat{\Gamma}}(\widehat{A}_1\partial_{\a}U,\partial_{\a}U)|_{x^1=0} \\=
\frac{1}{2}(\widehat{\mathcal{B}}_1\partial_{\a}V,\partial_{\a}V)|_{x^1=0}-\partial_{\a}{q}\,\partial_{\a}{v}_N|_{x^1=0}.
\end{multline*}

We first calculate the quadratic form $\mathcal{Q}$ determined in Section \ref{sec:6} (see \eqref{107}):
\[
\mathcal{Q}= \left.\left\{\hat{v}_2\mathcal{H}_1\mathcal{H}_2
+\hat{v}_3\mathcal{H}_1\mathcal{H}_3+(\hat{v}_2\partial_2\hat{\varphi}+\hat{v}_3\partial_3\hat{\varphi})\mathcal{H}_1^2+\ldots -qv_N\right\}\right|_{x^1=0}.
\]
This is a point when we have to perform {\it big} technical calculations. Omitting these rather long calculations which are really straightforward, we obtain
\[
\mathcal{Q}=\hat{\mu}\left.\left\{ {E}_N\partial_t\varphi+({\mathcal{H}}_{\tau_2}+\varkappa E_3)\partial_3\varphi -({\mathcal{H}}_{\tau_3}-\varkappa E_2)\partial_2\varphi\right\}\right|_{x^1=0}+\mbox{l.o.t.},
\]
where l.o.t. is the sum of lower-order terms like $[ \partial_1\hat{q}]\,v_N|_{x^1=0}\,\varphi$.
While obtaining the above formula for $\mathcal{Q}$ we used assumptions  \eqref{40}, the boundary conditions \eqref{91} and \eqref{96}. Taking into account the equation
\[
\partial_t{\mathfrak{e}}-\nabla^-\times {\mathfrak{H}}+\varkappa\,\partial_1{\mathfrak{e}}+ B(\hat{\varphi} )\mathfrak{e}=0
\]
following from \eqref{90} and applying the second equation in \eqref{94}, we rewrite $\mathcal{Q}$ as follows:
\[
\mathcal{Q}=\partial_t\left(\hat{\mu}\varphi E_N|_{x^1=0} \right)+\partial_2\left(\hat{\mu}\varphi(\varkappa E_2-\mathcal{H}_{\tau_3})|_{x^1=0} \right)  + \partial_3\left(\hat{\mu}\varphi (\mathcal{H}_{\tau_2}+\varkappa E_3)|_{x^1=0}\right)+\mbox{l.o.t.}
\]

Then we have
\begin{equation}
\int_{\partial\Omega_t}\mathcal{Q}_{\a}\,{\rm d}x'\,{\rm d}s=
\int_{\mathbb{R}^2}\hat{\mu}\partial_{\a}\varphi \partial_{\a}E_N|_{x^1=0}\,{\rm d}x'+\int_{\partial\Omega_t}(\mbox{l.o.t.}) \,{\rm d}x'{\rm d}s,
\label{150'}
\end{equation}
where l.o.t. is the sum of lower-order terms like
\[
\hat{c}\,\partial_{\a}v_N|_{x^1=0}\partial_{\a}\varphi ,\quad
\hat{c}\,\partial_{\a}q|_{x^1=0}\partial_{\a}\varphi ,\quad
\hat{c}\,\partial_{\a}\mathcal{H}_j|_{x^1=0}\partial_{\a}\varphi ,\quad
\hat{c}\,\partial_{\a}E_j|_{x^1=0}\partial_{\a}\varphi ,
\]
etc., here and below $\hat{c}$ is the common notation of a coefficient depending on the basic state \eqref{37}. By using the variable coefficients counterparts of \eqref{111} and \eqref{113}
(where we have additional zero-order terms in $\hat{\varphi}$; $H_1$ is replaced by $H_N$, etc.), we reduce the lower-order terms above to those like
\[
\hat{c}\,H_N\partial_{\a}v_N|_{x^1=0},\quad
\hat{c}\,\mathcal{H}_N\partial_{\a}\mathcal{H}_j|_{x^1=0},\quad \hat{c}\,H_N\partial_{\a}E_j|_{x^1=0},\quad \hat{c}\,\mathcal{H}_N\partial_{\a}q|_{x^1=0},
\]
etc., or even ``better'' terms like $\hat{c}\varphi \partial_{\a}E_j|_{x^1=0}$. By integration by parts such ``better'' terms can be reduced to the above ones and terms like $\hat{c}\varphi E_j|_{x^1=0}$. Actually, if $\a =0$, then the integration by parts leads to the appearance of terms in the form $\partial_t(\cdots )$ which do not disappear after the integration over the domain $\partial\Omega_t$. In this case we use the Young inequality (see explanations below).

We treat the terms like $\hat{c}\,H_N\partial_{\a}v_N|_{x^1=0}$  by passing to the volume integral and integrating by parts (the only difference from the analogous manipulation in Section \ref{sec:6} is the appearance of variable coefficients):
\begin{multline*}
\int_{\partial\Omega_t}\hat{c}\,{H}_N\partial_{\a}{v}_N|_{x^1=0}\,{\rm d}x'\,{\rm d}s
=-\int_{\Omega_t}\partial_1\bigl(\tilde{c}{H}_N\partial_{\a}{v}_N\bigr){\rm d}x\,{\rm d}s \\
=\int_{\Omega_t}\Bigl\{\tilde{c}\partial_{\a}{H}_N\partial_1{v}_N +(\partial_{\a}\tilde{c}){H}_N\partial_1{v}_N -\tilde{c}\partial_1{H}_N\partial_{\a}{v}_N-(\partial_1\tilde{c}){H}_N\partial_1{v}_N\Bigr\}{\rm d}x\,{\rm d}s \\
-\int_{\Omega_t}\partial_{\a}\bigl(\tilde{c}{H}_N\partial_1{v}_N\bigr){\rm d}x\,{\rm d}s,
\end{multline*}
where $\tilde{c}|_{x^1=0}=\hat{c}$. If $\a =2$ or $\a =3$ the last integral above is equal to zero. But for $\a =0$ we have:
\[
-\int_{\Omega_t}\partial_s\bigl(\tilde{c}{H}_N\partial_1{v}_N\bigr){\rm d}x\,{\rm d}s=
-\int_{\mathbb{R}^3_+}\tilde{c}{H}_N\partial_1{v}_N{\rm d}x\,{\rm d}s.
\]
Using the Young inequality, \eqref{128} and \eqref{130} (with $U_n=(q,v_N,H_N)$), we estimate the last integral as follows:
\[
-\int_{\mathbb{R}^3_+}\tilde{c}{H}_N\partial_{1}{v}_N\,{\rm d}x\leq C\left\{
[F]^2_{1,*,T}+\tilde{\varepsilon}\,\nt {U}(t)\nt^2_{1,*} +\frac{1}{\tilde{\varepsilon}}\,
[{U}]^2_{1,*,t}\right\},
\]
where $\tilde{\varepsilon} $ is a small positive constant. Concerning the lower-order terms $\hat{c}\varphi E_j|_{x^1=0}$, $\hat{c}\varphi H_N|_{x^1=0}$, etc., we treat them by using the trace theorems in $H^1$ and $H^1_*$.

If $\a =2$ or $\a =3$, we apply to the boundary integral in \eqref{150'} the same arguments as for the last integral there and take into account \eqref{126}--\eqref{128}, \eqref{130}--\eqref{133}. If $\a =0$, we first use the variable coefficients counterparts of \eqref{113}, \eqref{114} and then apply the same arguments as for the cases $\a =2$ and $\a =3$.
In particular, for variable coefficients in \eqref{114} we have $E_N$ instead of $E_1$, $\mathcal{H}_{\tau_2}$ instead of $\mathcal{H}_2$, and $\mathcal{H}_{\tau_3}$ instead of $\mathcal{H}_3$.

Thus, choosing $\tilde{\varepsilon} $ and the constant  $\varepsilon$ from the variable coefficients counterpart of \eqref{124}  to be small enough, we finally get inequality \eqref{134}, provided that the constant $\mu^*$ from Theorem \ref{t1} is also small enough.\footnote{Actually, for variable coefficients the less restrictive sufficient stability condition is the same as in \eqref{122}. The only difference is that coefficients of $\mathfrak{A}$ and $\mathcal{Q}$ are now functions depending on $\widehat{U}$ and $\widehat{V}$.} Then, applying Gronwall's lemma, we derive the desired a priori estimate \eqref{97}. We did not discuss assumption \eqref{38}, but we can carefully check that this assumption on the coefficients is indeed optimal. In this connection, we refer to \cite{T05} where the analogous assumption appeared for the linearized problem for current-vortex sheets.

\section{Concluding remarks}
\label{sec:9}

We have derived a basic a priori estimate for the linearized plasma-vacuum interface problem in RMHD under a sufficient stability condition. This condition precludes the violent instability of planar relativistic plasma-vacuum interfaces. We have also proved that, unlike the situation for fluid-vacuum free boundaries \cite{Lind,T09.cpam} and nonrelativistic plasma-vacuum interfaces \cite{T10}, the constant coefficients linearized plasma-vacuum problem in RMHD can be ill-posed due to a large enough vacuum electric field, i.e., Kelvin-Helmholz instability can really occur.

In the basic a priori estimate \eqref{54} obtained for the  variable coefficients linearized problem we have a loss of derivatives from the source terms to the solution. Therefore, we plan to prove the existence of solutions to the original nonlinear problem by a suitable Nash-Moser-type iteration scheme. We suppose that the scheme of the proof will be really similar to that in \cite{CS,T09,T09.cpam}. The important point in this direction is the derivation of a so-called tame estimate for solutions $(\dot{U},\dot{V},\varphi )$ of the linearized problem in $H^s_*(\Omega_T)\times H^s(\Omega_T)\times H^s(\partial\Omega_T)$ for $s$ large enough. 

In this paper we restricted ourselves to the case of special relativity. At first sight, to cover general relativity we could try to follow a scheme proposed in \cite{T09.cpam} for the fluid-vacuum problem.  Namely, in \cite{T09.cpam} we  proposed to resolve the matter equations by Nash-Moser iterations whereas at each Nash-Moser iteration step we find the metric from the Einstein equations by Picard iterations. It is quite possible that such a scheme of the proof of the existence of solutions to the nonlinear problem could work (however, we are not yet absolutely sure in success even for the general relativistic fluid-vacuum problem), but it is still unclear how to prove the uniqueness of a solution.

The point is that to prove uniqueness we have to derive an priori estimate for the linearized problem with all the lower-order terms. It means that we have to pass to the Alinhac's ``good unknown'' even in the linearized Einstein equations written in the form of a symmetric hyperbolic system \cite{Rendall}. That is the linear boundary conditions $[\partial_{\gamma}{g}_{\alpha\beta}]=0$ for the metric $g$ (see \cite{T09.cpam}) after the passage to the ``good unknown'' $\dot{g}_{\alpha\beta}$ become
\[
[\partial_{\gamma}\dot{g}_{\alpha\beta}]=-\varphi [\partial_{1}\partial_{\gamma}\hat{g}_{\alpha\beta}],
\]
where $\hat{g}_{\alpha\beta}$ is an ``unperturbed'' metric and $[\cdot ]$ denotes a jump at $x^1=0$. The right-hand sides in the above boundary conditions can be nonzero and the fact that for the fluid-vacuum problem the symbol associated to the free surface is not elliptic prevents obtaining the a priori estimate.

It is astonishing that for the general relativistic plasma-vacuum interface problem we do not have such a difficulty at all because there is a natural stabilizing physical mechanism. This mechanism is the magnetic field (!). Indeed, under assumption \eqref{52} the interface symbol is elliptic and, therefore, the lower-order terms appearing in the boundary conditions for the metric $g$ after the passage to the ``good unknown'' are not dangerous anymore. Thus, in the future after the proof of a nonlinear existence theorem for problem
\eqref{8}, \eqref{10}, \eqref{13}, \eqref{14}, \eqref{16}, \eqref{17} we hope to extend this result to the general relativistic version of this problem.

\vspace*{12pt}
\noindent
{\bf Acknowledgements}
\vspace*{9pt}

This work was supported by RFBR (Russian Foundation for Basic Research) grant No. 10-01-00320-a.

\end{document}